\theoremstyle{plain}
  \newtheorem{thm}{Theorem}
  \newtheorem{defn}{Definition}
  \newtheorem{prop}{Proposition}
\theoremstyle{definition}
  \newtheorem{example}{Example}
  \newtheorem*{rem}{Remark}
\newcommand{\mf}{\mathfrak}
\newcommand{\on}{\operatorname}
\newcommand{\g}{\mathfrak{g}}
\newcommand{\h}{\mathfrak{h}}
\newcommand{\dd}{\mathfrak{d}}
\newcommand{\Hom}{\operatorname{Hom}}
\newcommand{\la}{\langle}
\newcommand{\ra}{\rangle}
\newcommand{\R}{\mathbb{R}}
\title[Left and right centers in quasi-Poisson geometry]{Left and right centers in quasi-Poisson geometry of moduli spaces}
\author{Pavol \v{S}evera}
\address{Section of Mathematics, University of Geneva, Switzerland}
\email{pavol.severa@gmail.com}
\thanks{Supported in part by  the grant MODFLAT of the European Research Council and the NCCR SwissMAP of the Swiss National Science Foundation.}
\begin{document}
\maketitle

\begin{abstract}
We introduce left central and right central functions and left and right leaves in quasi-Poisson geometry, generalizing central (or Casimir) functions and symplectic leaves from Poisson geometry. They lead to a new type of (quasi-)Poisson reduction, which is both simpler and more general than known quasi-Hamiltonian reductions.  We study these notions in detail for moduli spaces of flat connections on surfaces, where the quasi-Poisson structure is given by an intersection pairing on homology.

\end{abstract}

\section{Introduction}
A function on a Poisson manifold is \emph{central} (or \emph{Casimir}) if it Poisson-commutes with every function. A function is central iff it is constant on each \emph{symplectic leaf} of the Poisson manifold. 

Among the most interesting Poisson spaces are moduli spaces of flat connections on an oriented compact surface, with the Poisson structure of Atiyah-Bott \cite{a-b} and Goldman \cite{gold} given by an intersection pairing on the surface. Their symplectic leaves are obtained by fixing the conjugacy classes of the holonomies along the boundary circles.

Moduli spaces on surfaces with marked points on the boundary carry a \emph{quasi-Poisson structure}
 \cite{a-m-m,a-ks-m}. A quasi-Poisson manifold is by definition a manifold with an action of a Lie algebra $\g$ and with an invariant bivector field $\pi$, satisfying
$$[\pi,\pi]/2=\rho(\phi),$$
where $\rho(\phi)$ is a 3-vector field coming from an invariant inner product on $\g$ and from the structure constants of $\g$. These moduli spaces have the advantage of being smooth and can be built out of simple pieces using the operation of fusion. Moduli spaces without marked points can then be obtained via a quasi-Hamiltonian reduction.

In the same way as Poisson structures are analogous to non-commutative algebras, quasi-Poisson structures are analogous to non-commutative algebras in a braided monoidal category. Motivated by this analogy, we introduce \emph{left central} and \emph{right central} functions, and the corresponding \emph{left} and \emph{right leaves} of quasi-Poisson manifolds. Besides the interesting geometry of these foliations, these notions bring a new type of  reduction, the \emph{central reduction} of quasi-Poisson manifolds, which produces symplectic and Poisson manifolds.

Despite the simplicity of central reduction (it is, in a way, simpler than the standard moment map reduction of symplectic manifolds), it contains as a special case the so far most general quasi-Hamiltonian reduction of \cite{lb-se-big}. Replacing moment maps with left/right centers also allows us to define the fusion of $D/H$-valued moment maps of \cite{a-ks}, which was so far missing.

As we mentioned above, the motivating and also the most important examples of quasi-Poisson manifolds are moduli spaces of flat $\g$-connections on a surface with marked points on the boundary. We reformulate these quasi-Poisson structures as an intersection pairing of homology with coefficients in a local system. This formulation is manifestly natural (it doesn't use any splitting of the surface to simpler pieces) and also very similar the formulation of Atiyah-Bott and Goldman. It also reduces all non-degeneracy problems to Poincar\'e duality, and left/right centers are found simply as holonomies along the parts of the boundary that don't intersect any cycle. Central reduction then produces  interesting examples of symplectic and Poisson manifolds.


\subsection*{Acknowledgment}
I would like to thank to David Li-Bland for many useful discussions.

\section{Leaves and central maps in quasi-Poisson geometry}

Let $\g$ be a Lie algebra with an invariant element $t\in(S^2\g)^\g$. Let $\phi\in\bigwedge^3\g$ be given by
$\phi=-[t^{1,2},t^{2,3}]/4$, i.e.
$$\phi(\alpha,\beta,\gamma)=\frac{1}{4}\bigl\langle[t^\sharp\alpha,t^\sharp\beta],\gamma\bigr\rangle\text{ for all }\alpha,\beta,\gamma\in\g^*.$$

The following definition is due to Alekseev, Kosmann-Schwarzbach, and Meinrenken \cite{a-ks-m}.
\begin{defn}\label{def:qpoiss}
A \emph{$\g$-quasi-Poisson manifold} is a manifold $M$ with an action $\rho$ of $\g$ and with a $\g$-invariant bivector field $\pi$, satisfying
$$[\pi,\pi]/2=\rho^{\otimes 3}(\phi).$$
A map $F:M\to M'$ between two $\g$-quasi-Poisson manifolds is \emph{quasi-Poisson} if it is $\g$-equivariant and if $F_*\pi_M=\pi_{M'}$.
\end{defn}

If $(M,\rho,\pi)$ is a quasi-Poisson manifold, let
$$\sigma=\pi+\frac{1}{2}\,\rho^{\otimes 2}(t)\in\Gamma(T^{\otimes2}M).$$
\begin{defn}
A function $f\in C^\infty(M)$ is \emph{left-central} if $\sigma(df,\cdot)=0$ and \emph{right-central} if $\sigma(\cdot,df)=0$.
\end{defn}

\begin{rem}
This definition, as well as many other things in this paper, is motivated by the following quantum analogue. Let $\Phi$ be a Drinfeld associator and let $U\g\text{-mod}^\Phi$ be the category of $U\g$-modules with the braiding and the associativity constraint defomed by $t$ and $\Phi$ (see \cite{drinfeld} for details). Suppose that $\g$ acts on $M$, and that $\ast$ is a star product on $M$ making $C^\infty(M)$ to an associative algebra in $U\g\text{-mod}^\Phi$, i.e.
$$\ast\circ(\ast\otimes1)=\ast\circ(1\otimes\ast)\circ(\Phi\,\cdot).$$
Then, as observed in \cite{en-et}, there is a $\g$-quasi-Poisson structure on $M$ given by
$$\pi(df,dg)=(f\ast g-g\ast f)/\hbar\mod\hbar.$$
The $\hbar$-term of the braided commutator
$$
\begin{tikzpicture}[baseline=1cm]
\coordinate(E) at (0,2.5);
\coordinate[label=right:{$\ast$}](P) at (0,1.7);
\node(f) at (-0.5,0) {$f$};
\node(g) at (0.5,0) {$\vphantom{f}g$};
\draw (g)..controls+(0,1) and +(0.5,-0.5)..(P);
\draw (f)..controls+(0,1) and +(-0.5,-0.5)..(P);
\draw (P)--(E);
\end{tikzpicture}
\,-
\begin{tikzpicture}[baseline=1cm]
\coordinate(E) at (0,2.5);
\coordinate[label=right:{$\ast$}](P) at (0,1.7);
\node(f) at (-0.5,0) {$f$};
\node(g) at (0.5,0) {$\vphantom{f}g$};
\draw[line width=1ex,white](f)..controls+(0,1) and +(1,-1)..($(P)+(0.02,-0.02)$);
\draw (f)..controls+(0,1) and +(1,-1)..(P);
\draw[line width=1ex,white] (g)..controls+(0,1) and +(-1,-1)..($(P)+(-0.02,-0.02)$);
\draw (g)..controls+(0,1) and +(-1,-1)..(P);
\draw (P)--(E);
\end{tikzpicture}
$$
is then equal to $\sigma(df,dg)$; the braiding contributes the  symmetric part  $\frac{1}{2}\,\rho^{\otimes 2}(t)$ of $\sigma$.

As a general rule, we shall only use those parts of quasi-Poisson geometry which have a clear quantum analogues, i.e.\ which make sense for algebras in the braided monoidal category $U\g\text{-mod}^\Phi$.

Let us also remark here that one can define quasi-Poisson algebras in an arbitrary infinitesimally braided category $\mathcal C$, i.e.\ a linear symmetric monoidal category with a natural transformation $t^{X,Y}:X\otimes Y\to X\otimes Y$ satisfying the relations $t^{X,Y}=t^{Y,X}$ and $t^{X\otimes Y,Z}=t^{X,Z}+t^{Y,Z}$: a quasi-Poisson algebra $A$ in $\mathcal C$ is by definition a commutative algebra together with a skew-symmetric morphism $\{,\}:A\otimes A\to A$ satisfying the Leibniz rule and the quasi-Jacobi identity 
$$\{\{.,.\},.\}+c.p.=-\frac{1}{4}\bigl[t^{A,A}\otimes\on{id},\on{id}\otimes t^{A,A}\bigr]:A\otimes A\otimes A\to A.$$
\end{rem}

\begin{defn}
A $\g$-quasi-Poisson manifold is \emph{quasi-Poisson-commutative} if $\sigma=0$. Equivalently, it is a $\g$-manifold such that the stabilizers of points are coisotropic Lie subalgebras of $\g$, together with the bivector field $\pi=0$.
\end{defn}
Here a Lie subalgebra $\mf c\subset\g$ is called \emph{coisotropic} if the image of $t$ in $S^2(\g/\mf c)$ vanishes.
The most straightforward example of a quasi-Poisson-commutative manifold is $G/C$, where $G$ is a Lie group with the Lie algebra $\g$ and $C\subset G$ is a closed subgroup with a coisotropic Lie algebra $\mf c\subset\g$.

\begin{defn}
If $M$ is $\g$-quasi-Poisson and $N$ is $\g$-quasi-Poisson-commutative, a map $F:M\to N$ is \emph{left (or right) central}, if $F$ is $\g$-equivariant and if $F^* f\in C^\infty(M)$ is left (or right) central for every $f\in C^\infty(N)$.
\end{defn}
Notice that a left (or right) central map is automatically quasi-Poisson.

We can characterize left (or right) central maps as follows.
 There are three natural integrable distributions on $M$:  
$$T^LM=\text{the image of }a_L:T^*M\to TM,\ \alpha\mapsto\sigma(\cdot,\alpha),$$
$$T^RM=\text{the image of }a_R:T^*M\to TM,\ \alpha\mapsto\sigma(\alpha,\cdot),$$
$$T^{big}M=\text{the image of }a:\g\oplus T^*M\to TM,\ (u,\alpha)\mapsto\rho(u)+\pi(\alpha,\cdot).$$
Their integrability follows from the fact that $a_L$, $a_R$, and $a$ are the anchor maps for certain Lie algebroid structures (see \cite{qhg} for $a_L$ and $a_R$ and \cite{b-c} for $a$). Notice that 
\begin{equation*}
T^LM+\rho(\g)=T^RM+\rho(\g)=T^{big}M
\end{equation*}
and that $\sigma$ gives a non-degenerate pairing 
$$\sigma^{-1}:T^LM\times T^RM\to\R$$
given by 
$$\sigma^{-1}(u,v)=\sigma(\alpha,\beta),\quad\alpha,\beta\in T^*M\text{ such that }v=\sigma(\alpha,\cdot),u=\sigma(\cdot,\beta).$$
This pairing is a generalization of the symplectic form on the symplectic leaves of a Poisson manifold: if $\g=0$ then $\sigma=\pi$ and $T^LM=T^RM$ is the tangent space of the symplectic leaves of the Poisson structure $\pi$, and $\sigma^{-1}$ is the symplectic form on $T^LM=T^RM$.

\begin{defn}
The integral leaves of $T^LM$ are the \emph{left leaves} of $M$, the integral leaves of $T^RM$ are the \emph{right leaves} of $M$, and the integral leaves of $T^{big}M$ are the \emph{big leaves} of $M$.
\end{defn}

Notice  that a function is left-central (right-central) iff it is constant on the left (right) leaves.  An equivariant map $F:M\to N$ is thus left (or right) central iff it is constant on each left (or right) leaf.

Since $\sigma$ is $\g$-invariant, each of the three foliations is $\g$-invariant. Big leaves are (minimal) $\g$-quasi-Poisson submanifolds of $M$. Left and right leaves are contained within the big leaves; they are (in general) not quasi-Poisson submanifolds (see, however, Theorem \ref{thm:qp-qp}). Let us also notice that while foliation by big leaves can be singular (the dimension of big leaves can jump), the foliation of a given big leaf by left (or right) leaves is a true foliation:

\begin{prop}
The action of $\g$ on the space of left (or right) leaves contained within a single big leaf $Y\subset M$ is locally transitive; in particular, left leaves form a foliation of $Y$. The stabilizer of any left (or right) leaf is a coisotropic subalgebra.
\end{prop}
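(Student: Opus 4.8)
The plan is to treat the three assertions in turn, all of them flowing from the two structural facts already recorded: the equality $T^LM+\rho(\g)=T^{big}M$, and the nondegeneracy of $\sigma^{-1}$ on $T^LM\times T^RM$ (which gives $\dim T^L_pM=\dim T^R_pM=\operatorname{rank}\sigma_p$). Throughout I work inside a fixed big leaf $Y$, where $T^{big}M$ restricts to $TY$, so that $\rho_p(\g)+T^L_pM=T_pY$ at every $p\in Y$. The foliation and the local transitivity are then essentially formal consequences of these facts together with the $\g$-invariance of $\sigma$, while the coisotropy of the stabilizer is the one genuine computation and is where the precise form $\sigma=\pi+\half\,\rho^{\otimes2}(t)$ is used.

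First I would establish the foliation and the local transitivity simultaneously. Since $\pi$ and $\rho^{\otimes2}(t)$ are $\g$-invariant, so is $\sigma$; hence $T^LM$ is a $\g$-invariant distribution and the flow of each $\rho(u)$ maps left leaves diffeomorphically onto left leaves, preserving their dimension. The (a priori singular) distribution $T^LM+\rho(\g)$ equals $T^{big}M$, which on $Y$ is all of $TY$, so the Stefan--Sussmann orbit of $T^LM+\rho(\g)$ through any point is the whole connected manifold $Y$. This orbit is generated by flows of two kinds: those tangent to $T^LM$, which keep one inside a single left leaf, and those of the form $\rho(u)$, which carry a leaf diffeomorphically onto another leaf. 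Each kind preserves the dimension of the left leaf, so $p\mapsto\operatorname{rank}\sigma_p$ is constant on $Y$; an integrable distribution of constant rank is a regular foliation, which is the foliation of $Y$ by left leaves. Finally, at each $p$ the surjection $\rho_p(\g)\twoheadrightarrow T_pY/T^L_pM$ coming from $\rho_p(\g)+T^L_pM=T_pY$ is precisely the infinitesimal action of $\g$ on the local leaf space, and its surjectivity means the $\g$-orbits there are open, i.e.\ the action is locally transitive. The right-leaf statements follow identically.

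For the coisotropy, fix $p\in Y$ and write $a_L,a_R:T^*_pM\to T_pM$ for the anchors, so $T^L_pM=\operatorname{im}a_L$ and the infinitesimal stabilizer of the left leaf is $\mf s=\rho_p^{-1}(\operatorname{im}a_L)$. With $\pi^\sharp\alpha=\pi(\alpha,\cdot)$ and $\rho_p^*:T^*_pM\to\g^*$ the transpose of $\rho_p$, the splitting $\sigma=\pi+\half\,\rho^{\otimes2}(t)$ gives $a_L(\alpha)=-\pi^\sharp\alpha+\half\,\rho_p t^\sharp\rho_p^*\alpha$ and $a_R(\alpha)=\pi^\sharp\alpha+\half\,\rho_p t^\sharp\rho_p^*\alpha$, so that $a_L$ and $a_R$ are mutually transpose, $a_L^*=a_R$. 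Hence $\ann(\operatorname{im}a_L)=\ker a_R$, and the elementary identity $\ann(\rho_p^{-1}W)=\rho_p^*(\ann W)$ yields $\ann\mf s=\rho_p^*(\ker a_R)$. It remains to check that $t^\sharp$ sends this back into $\mf s$: for $\alpha\in\ker a_R$ the relation $\pi^\sharp\alpha=-\half\,\rho_p t^\sharp\rho_p^*\alpha$ gives
$$a_L(\alpha)=-\pi^\sharp\alpha+\half\,\rho_p t^\sharp\rho_p^*\alpha=\rho_p\bigl(t^\sharp\rho_p^*\alpha\bigr),$$
so $\rho_p(t^\sharp\rho_p^*\alpha)\in\operatorname{im}a_L$, i.e.\ $t^\sharp\rho_p^*\alpha\in\mf s$. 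Thus $t^\sharp(\ann\mf s)\subseteq\mf s$, which is exactly the statement that $\mf s$ is coisotropic; the right-leaf case is identical after exchanging $a_L\leftrightarrow a_R$.

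I expect the main obstacle to be not the final manipulation but setting it up correctly. One must compute $\ann\mf s$ through the transpose relation $a_L^*=a_R$, and it is precisely the hypothesis that the symmetric part of $\sigma$ is $\half\,\rho^{\otimes2}(t)$ --- tying $t$ to the action $\rho$ --- that makes $a_L(\alpha)$ collapse to $\rho_p(t^\sharp\rho_p^*\alpha)$ and hence makes $t^\sharp$ land back in $\mf s$; without this exact coefficient the argument fails. A secondary point needing care is the passage from the infinitesimal transversality $\rho_p(\g)+T^L_pM=T_pY$ to an honest regular foliation carrying a locally transitive action, where the $\g$-invariance of $\sigma$ (constancy of $\operatorname{rank}\sigma$ along orbits) is what rules out jumps of the leaf dimension inside $Y$.
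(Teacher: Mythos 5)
Your proof is correct and follows essentially the same route as the paper: local transitivity (and hence the regular foliation) from $T^LM+\rho(\g)=T^{big}M$ together with the $\g$-invariance of $\sigma$, and coisotropy of the stabilizer from the vanishing of $\sigma$ on left-central covectors. The only cosmetic difference is that the paper deduces coisotropy in one line by observing that the local leaf space is quasi-Poisson-commutative (so its stabilizers are coisotropic by definition), whereas you unpack exactly the same fact into the explicit computation $a_L^*=a_R$, $\ann\mf s=\rho_p^*(\ker a_R)$, $t^\sharp(\ann\mf s)\subseteq\mf s$.
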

\begin{proof}
Transitivity follows from $T^LM+\rho(\g)=T^{big}M$. The local space of left (or right) leaves within $Y$ is quasi-Poisson-commutative, so the stabilizers must be coisotropic. 
\end{proof}

Let us now single out quasi-Poisson manifolds with the most interesting left and right centers.

\begin{defn}
A $\g$-quasi-Poisson manifold $M$ is \emph{split-symplectic} if $t\in S^2\g$ is non-degenerate, if $T^{big}M=TM$ (i.e.\ $M$ has a single big leaf), and if the stabilizers of the left (or equivalently right) leaves are Lagrangian.
\end{defn}
Here we call a Lie subalgebra $\mf c\subset\g$ \emph{Lagrangian} (provided $t$ is non-degenerate, so that it defines a pairing on $\g$) if $\mf c^\perp=\mf c$. It implies that $\dim\g$ is even, as $\dim\g=2\dim\mf c$. The split-symplectic condition says that the dimension of the left (and thus also of the right) leaves is as small as possible (supposing $T^{big}M=TM$ and non-degenerate $t$), namely
$$\on{rank}T^LM=\on{rank}T^RM=\dim M-\frac{1}{2}\dim\g.$$

\begin{example}\label{ex:main}
Suppose that $t_\g\in (S^2\g)^\g$ is non-degenerate. Let $\bar\g$ be $\g$ with $t_{\bar\g}=-t_\g$, and let $\dd=\g\oplus\bar\g$, with $t_\dd=t_\g\oplus t_{\bar\g}$.

If $G$ is a connected group integrating $\g$, then $G$ with the action of $\dd$, $(u,v)\mapsto u^L-v^R$, is a $\dd$-quasi-Poisson-commutative manifold (here $u^L$ and $v^R$ are the left and right invariant vector fields on $G$ corresponding to $u,v\in\g$).

Let us now consider $G\times G$ with the diagonal action of $\dd$. The bivector field
$$\pi=\sum_{ij}t^{ij}\bigl((0,e_i^L)\wedge(e_j^L,0)-(0,e_i^R)\wedge(e_j^R,0)\bigr)$$
($e_i$ is a basis of $\g$ and $t_\g=\sum_{ij}t^{ij}\,e_i\otimes e_j$) makes it to a $\dd$-quasi-Poisson manifold. As
$$\sigma=\sum_{ij}t^{ij}\bigl((0,e_i^L)\otimes(e_j^L,0)-(0,e_i^R)\otimes(e_j^R,0)\bigr),$$
the projection $p_1:G\times G\to G$ to the first factor is left-central and the projection $p_2$ to the second factor is right-central.

The big leaves in $G\times G$ are $m^{-1}(\mathcal O)$, where $m:G\times G\to G,(g_1,g_2)\mapsto g_1g_2^{-1}$, and $\mathcal O\subset G$ runs over conjugacy classes of $G$. For every conjugacy class $\mathcal O\subset G$ the manifold $m^{-1}(\mathcal O)$ is split-symplectic, and its left (resp.\ right) leaves are the fibres of $p_1$ (resp. $p_2$).

The $\dd$-quasi-Poisson manifold $G\times G$ can be described as the fusion product $G\circledast G$ (see Section \ref{sec:fusion}). As we shall see in Section \ref{sec:moduli}, it can also be interpreted as the moduli space of flat $G$-bundles on the annulus with two marked points on the exterior circle:
$$
\begin{tikzpicture}[>=stealth]
\fill[color=white!90!black] (0,0)circle [radius=1cm];
\draw (0,0) circle [radius=1cm];
\fill[color=white] (0,0)circle [radius=0.3cm];
\draw (0,0) circle [radius=0.3cm];
\fill[color=black] (0,1)circle [radius=0.05cm];
\fill[color=black] (0,-1)circle [radius=0.05cm];
\node[above] at (0,1) {$-$};
\node[below] at (0,-1) {$+$};
\node[left] at (-1,0) {$g_1$};
\node[right] at (1,0) {$g_2$};
\draw[->] (1,0) arc [start angle=180, end angle=181, radius=1cm];
\draw[->] (-1,0) arc [start angle=180, end angle=181, radius=1cm];
\end{tikzpicture}
$$
 The submanifold $m^{-1}(\mathcal O)$  is given by restricting the holonomy along the internal circle to be in $\mathcal O$.
\end{example}

\section{Central reduction of quasi-Poisson manifolds}
If  $\mf c\subset \g$ is a coisotropic Lie subalgebra, i.e.\ if the image of $t\in S^2\g$ in $S^2(\g/\mf c)$ vanishes, then also the image of $\phi\in\bigwedge^3\g$ in $\bigwedge^3(\g/\mf c)$ vanishes. As a consequence, we get the following result.

\begin{prop}[\cite{lb-se}]\label{prop:red}
If $M$ is a $\g$-quasi-Poisson manifold then the algebra of $\mf c$-invariant functions $C^\infty(M)^{\mf c}\subset C^\infty(M)$ is a Poisson algebra, with the Poisson bracket
$$\{f,g\}=\pi(df,dg)=\sigma(df,dg)$$
inherited from $C^\infty(M)$. In particular, if $M/\mf c$ is a manifold then it is Poisson.
\end{prop}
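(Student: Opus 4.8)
The plan is to verify directly that the candidate bracket $\{f,g\}=\pi(df,dg)$ makes the subalgebra $C^\infty(M)^{\mf c}$ into a Poisson algebra. Two of the axioms are immediate and require no hypotheses: skew-symmetry holds because $\pi$ is a bivector field, and the Leibniz rule $\{fg,h\}=f\{g,h\}+g\{f,h\}$ holds because $\pi(\,\cdot\,,dh)$ is a derivation. The real content therefore lies in three points, which I would treat in turn: that the bracket of two $\mf c$-invariant functions is again $\mf c$-invariant (so that the bracket is well-defined on $C^\infty(M)^{\mf c}$), that the two formulas $\pi(df,dg)$ and $\sigma(df,dg)$ agree on such functions, and finally the Jacobi identity.

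For well-definedness, I would take $u\in\mf c$ and $f,g\in C^\infty(M)^{\mf c}$ and apply the Lie-derivative identity
$$\rho(u)\bigl(\pi(df,dg)\bigr)=(\mathcal L_{\rho(u)}\pi)(df,dg)+\pi\bigl(d(\rho(u)f),dg\bigr)+\pi\bigl(df,d(\rho(u)g)\bigr).$$
Here the first term vanishes because $\pi$ is $\g$-invariant, and the remaining two vanish because $f$ and $g$ are $\mf c$-invariant; hence $\{f,g\}\in C^\infty(M)^{\mf c}$. For the equality of the two formulas, recall $\sigma=\pi+\half\,\rho^{\otimes2}(t)$, so it is enough to show $\rho^{\otimes2}(t)(df,dg)=0$. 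Coisotropy says $t$ maps to $0$ in $S^2(\g/\mf c)$, so $t$ lies in the symmetric product $\mf c\cdot\g\subset S^2\g$; writing $t=\sum_k u_k\cdot a_k$ (symmetric products) with $u_k\in\mf c$, every term of $\rho^{\otimes2}(t)(df,dg)$ carries a factor $\rho(u_k)f$ or $\rho(u_k)g$, each of which vanishes, establishing the second displayed formula of the statement.

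The Jacobi identity is the crux, and it is the only step using the full quasi-Poisson condition. I would invoke the standard fact that the Jacobiator of a bivector is governed by the Schouten bracket,
$$\{\{f,g\},h\}+\text{c.p.}=\half\,[\pi,\pi](df,dg,dh)=\rho^{\otimes3}(\phi)(df,dg,dh),$$
where the last equality is the defining relation $[\pi,\pi]/2=\rho^{\otimes3}(\phi)$. The main obstacle is to see that this anomaly vanishes on $\mf c$-invariant functions, and this is precisely where I would use the observation recorded just before the statement: coisotropy of $\mf c$ forces $\phi$ to map to $0$ in $\bigwedge^3(\g/\mf c)$, so that $\phi$ lies in $\mf c\wedge\textstyle\bigwedge^2\g$. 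Consequently each monomial of $\rho^{\otimes3}(\phi)$ has a leading factor $\rho(u)$ with $u\in\mf c$, and evaluating the resulting $3$-vector on $(df,dg,dh)$ yields a determinant with a row $(\rho(u)f,\rho(u)g,\rho(u)h)=(0,0,0)$, hence $0$. This gives the Jacobi identity and finishes the proof that $C^\infty(M)^{\mf c}$ is a Poisson algebra. The final assertion is then immediate: when $M/\mf c$ is a smooth manifold, its function algebra is identified with $C^\infty(M)^{\mf c}$, and the bracket just constructed descends to a Poisson structure on $M/\mf c$.
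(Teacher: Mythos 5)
Your proof is correct and follows exactly the route the paper intends: the paper gives no detailed argument (it cites \cite{lb-se} and presents the proposition ``as a consequence'' of the observation that coisotropy of $\mf c$ forces $\phi$ to vanish in $\bigwedge^3(\g/\mf c)$), and your verification --- invariance of the bracket from $\g$-invariance of $\pi$, agreement of $\pi$ and $\sigma$ from $t\in\mf c\cdot\g$, and the Jacobi identity from $\phi\in\mf c\wedge\bigwedge^2\g$ --- is precisely the standard expansion of that observation. No gaps; the only cosmetic caveat is that the sign in $\{\{f,g\},h\}+\text{c.p.}=\pm\frac{1}{2}[\pi,\pi](df,dg,dh)$ is convention-dependent, which does not affect the vanishing argument.
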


\begin{rem}
By $M/\mf c$ being a manifold we mean the following: there is a manifold $M'$ and a surjective submersion $M\to M'$ such that its fibres are the $\mf c$-orbits on $M$.
\end{rem}

The following observation is obvious, but also central for this section.
\begin{prop}\label{prop:cr}
Let $M$ be a $\g$-quasi-Poisson manifold and $f\in C^\infty(M)$ a left (or right) central function. If $f$ is $\mf c$-invariant for some coisotropic $\mf c\subset\g$ then it is central (i.e.\ Casimir) in the Poisson algebra $C^\infty(M)^{\mf c}$.
\end{prop}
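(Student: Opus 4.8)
The plan is to combine the explicit form of the reduced Poisson bracket from Proposition~\ref{prop:red} with the defining property of a left-central function, so the argument is really just one contraction. First I would note that the hypothesis that $f$ is $\mf c$-invariant is exactly what guarantees $f\in C^\infty(M)^{\mf c}$, so that the question ``is $f$ Casimir?'' is meaningful in the first place; once $f$ lies in this Poisson algebra, being central amounts to checking $\{f,g\}=0$ for every $g\in C^\infty(M)^{\mf c}$.

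The key step is to use Proposition~\ref{prop:red} in its $\sigma$-form: for $\mf c$-invariant $f,g$ the reduced bracket is $\{f,g\}=\pi(df,dg)=\sigma(df,dg)$. This is the decisive point, because left-centrality of $f$ says precisely that the contraction $\sigma(df,\cdot)$ vanishes identically as a vector field, i.e.\ $\sigma(df,\beta)=0$ for every $\beta\in T^*M$. Taking $\beta=dg$ for an arbitrary $g\in C^\infty(M)^{\mf c}$ then yields $\{f,g\}=\sigma(df,dg)=0$, so $f$ is central in $C^\infty(M)^{\mf c}$. For the right-central case I would argue symmetrically: right-centrality means $\sigma(\cdot,df)=0$, hence $\sigma(dg,df)=0$ for every $g$, and using antisymmetry of the reduced bracket, $\{f,g\}=-\{g,f\}=-\sigma(dg,df)=0$, so again $f$ is Casimir.

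There is essentially no obstacle here; the statement is immediate once the bracket is expressed through $\sigma$. The only point that requires a moment's care is precisely this choice: one must invoke the $\sigma$-version of the bracket from Proposition~\ref{prop:red} rather than the bare formula $\{f,g\}=\pi(df,dg)$, since left- and right-centrality are conditions on $\sigma$ and not directly on $\pi$. (If one insisted on working with $\pi$, one would instead have to cancel the correction term $\tfrac12\rho^{\otimes2}(t)(df,dg)$ using $\mf c$-invariance of both arguments together with coisotropy of $\mf c$; the $\sigma$-formulation sidesteps this entirely and even shows $\sigma(df,dg)=0$ for \emph{all} $g$.) After that observation both cases collapse to a single contraction.
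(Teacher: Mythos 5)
Your proposal is correct and follows exactly the paper's own argument: invoke $\{f,g\}=\sigma(df,dg)$ from Proposition~\ref{prop:red} and then apply left- (or right-) centrality to kill the contraction. The extra remarks about why the $\sigma$-form is the right one to use are accurate but not needed; the proof is the same one-line computation as in the paper.
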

\begin{proof}
If $g\in C^\infty(M)^{\mf c}$ then $\{f,g\}=\sigma(df,dg)=0$, as $f$ is left central. The proof for right-central $f$'s is similar.
\end{proof}

Let us call a diagram
$$
\begin{tikzcd}[column sep=small]
& M \arrow{dl}[swap]{\mu_L}\arrow{dr}{\mu_R} & \\
N_L & & N_R
\end{tikzcd}
$$
a \emph{central pair} if $M$ is a $\g$-quasi-Poisson manifold and $\mu_L$ and $\mu_R$ are a left and a right central map respectively.

As a version of Proposition \ref{prop:cr} we get the following reduction method.
\begin{thm}[Central reduction]\label{thm:cred}
Let 
\begin{equation}\label{eq:cpair}
\begin{tikzcd}[column sep=small]
& M \arrow{dl}[swap]{\mu_L}\arrow{dr}{\mu_R} & \\
N_L & & N_R
\end{tikzcd}
\end{equation}
be a central pair. Let $\mf c\subset\g$ be a coisotropic Lie subalgebra and let $\mathcal O_L\subset N_L$ and $\mathcal O_R\subset N_R$ be $\mf c$-invariant submanifolds. If $\mu_L\times\mu_R$ is transverse to $\mathcal O_L\times \mathcal O_R$ and if $M/\mf c$ is a manifold then
\begin{equation}\label{eq:subP}
\bigl(\mu_L^{-1}(\mathcal O_L)\cap\mu_R^{-1}(\mathcal O_R)\bigr)/\mf c\subset M/\mf c
\end{equation}
is a Poisson submanifold.
\end{thm}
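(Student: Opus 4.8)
The plan is to reduce everything to a pointwise statement on $M$ and then to play the three hypotheses---left/right centrality, coisotropy, and transversality---against one another. Write $q\colon M\to M/\mf c$ for the quotient submersion and $\bar\pi$ for the Poisson bivector on $M/\mf c$ produced by Proposition \ref{prop:red}. Set $P=\mu_L^{-1}(\mathcal O_L)\cap\mu_R^{-1}(\mathcal O_R)$. By the transversality assumption $P$ is a submanifold, and since $\mu_L,\mu_R$ are equivariant and $\mathcal O_L,\mathcal O_R$ are $\mf c$-invariant, $P$ is $\mf c$-saturated, so that $P=q^{-1}(\bar P)$ and $\bar P$ is a submanifold of $M/\mf c$. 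I would then invoke the pointwise criterion that $\bar P$ is a Poisson submanifold iff $\bar\pi^\sharp$ annihilates every covector in the conormal space $N^*\bar P$.

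To move this criterion down to $M$, note that $q^*$ embeds $T^*(M/\mf c)$ as the annihilator $\rho(\mf c)^\circ\subset T^*M$ and identifies the conormal space of $\bar P$ with $N^*P$; moreover, by Proposition \ref{prop:red}, on covectors lying in $\rho(\mf c)^\circ$ the reduced bracket is computed by $\sigma$ (the symmetric correction $\half\,\rho^{\otimes2}(t)$ drops out there by coisotropy). Hence it suffices to prove the purely algebraic fact that $\sigma(\xi,\eta)=0$ for every $\xi\in N^*_mP$ and every $\eta\in\rho(\mf c)^\circ$, at every $m\in P$.

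Here transversality enters a second time, now in the form $N^*_mP=(d\mu_L)^*N^*\mathcal O_L+(d\mu_R)^*N^*\mathcal O_R$, so a general $\xi$ splits as $\xi=\mu_L^*\alpha+\mu_R^*\beta$ with $\alpha,\beta$ conormal to $\mathcal O_L,\mathcal O_R$. The first term is harmless: $\sigma(\mu_L^*\alpha,\,\cdot\,)=0$ follows at once from left centrality of $\mu_L$, whence $\sigma(\mu_L^*\alpha,\eta)=0$.

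The term $\sigma(\mu_R^*\beta,\eta)$ is where the real work lies, and I expect it to be the main obstacle. Right centrality controls $\mu_R^*\beta$ only in the second slot, giving $\sigma(\eta,\mu_R^*\beta)=0$, whereas here it sits in the first. Adding the two expressions and using that $\pi$ is skew while $t$ is symmetric collapses the discrepancy to the symmetric part alone, $\sigma(\mu_R^*\beta,\eta)=\rho^{\otimes2}(t)(\mu_R^*\beta,\eta)$, and all the remaining hypotheses then conspire to kill this. Coisotropy places $t$ in $\mf c\otimes\g+\g\otimes\mf c$, so in each term of $\rho^{\otimes2}(t)(\mu_R^*\beta,\eta)$ one of the two factors is either $\eta$ paired against $\rho(\mf c)$---zero since $\eta\in\rho(\mf c)^\circ$---or $(d\mu_R)^*\beta$ paired against $\rho_M(\mf c)$, which by equivariance of $\mu_R$ equals $\beta$ paired against $\rho_{N_R}(\mf c)$ and hence vanishes because $\beta$ annihilates the $\mf c$-invariant $\mathcal O_R$. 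Therefore $\sigma(\xi,\eta)=\sigma(\mu_L^*\alpha,\eta)+\sigma(\mu_R^*\beta,\eta)=0$, and $\bar P$ is a Poisson submanifold.
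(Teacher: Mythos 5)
Your proposal is correct and follows essentially the same route as the paper: identify the reduced bracket with $\sigma$ on covectors annihilating $\rho(\mf c)$, decompose the conormal of the preimage via transversality, and kill the two pieces using left- and right-centrality. The only cosmetic difference is in the $\mathcal O_R$ term, where the paper simply notes $\pi_{red}(\cdot,\mu_R^*\beta)=\sigma(\cdot,\mu_R^*\beta)=0$ and invokes skew-symmetry of $\pi_{red}$, whereas you explicitly compute the symmetric correction $\rho^{\otimes2}(t)(\mu_R^*\beta,\eta)$ and show it vanishes by coisotropy --- the same ingredients arranged slightly differently.
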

\begin{proof}
The Poisson bivector field $\pi_{red}$ on $M/\mf c$ is given by 
$$\pi_{red}(\alpha,\beta)=\pi(\alpha,\beta)=\sigma(\alpha,\beta)$$
 for any $\alpha,\beta\in T^*_xM$ which descend to $T^*_{[x]}(M/\mf c)$, i.e.\ which are in the kernel of $i_{\rho(u)}$ for every $u\in\mf c$. If $\alpha$ is the pull-back of a covector from $N_L$ then by left-centrality of $\mu_L$ we have $\sigma(\alpha,\cdot)=0$. This shows that $\pi_{red}(\alpha,\cdot)=0$ whenever $\alpha$ is the pull-back of an element of the conormal bundle of $\mathcal O_L$. A similar argument applies to $\mathcal O_R$, hence \eqref{eq:subP} is indeed a Poisson submanifold.
\end{proof}

A natural question is how to describe the symplectic leaves of the Poisson manifold $M/\mf c$. Theorem \ref{thm:cred} does it under the following circumstances.

\begin{defn}
A central pair \eqref{eq:cpair} is \emph{split-symplectic} if $t\in S^2\g$ is non-degenerate, $T^{big}M=TM$ (i.e.\ $M$ has just one big leaf), and the actions of $\g$ on  $N_L$ and $N_R$ are transitive with Lagrangian stabilizers.
\end{defn}
\begin{rem}
If \eqref{eq:cpair} is a split-symplectic central pair then $M$ is a split-symplectic $\g$-quasi-Poisson manifold, and the fibers of $\mu_L$ ($\mu_R$) are the left (right) leaves of $M$. If, on the other hand, $M$ is a split-symplectic $\g$-quasi-Poisson manifold then we can set $N_L$ ($N_R$) to be the local space of left (right) leaves of $M$, and get (locally) a split-symplectic central pair. Split-symplectic central pairs and split-symplectic quasi-Poisson manifolds are thus essentially the same thing.
\end{rem}

\begin{thm}[Split-symplectic central reduction]\label{thm:scred}
Suppose, in the context of Theorem~\ref{thm:cred}, that \eqref{eq:cpair} is split-symplectic, $\mf c\subset\g$ is Lagrangian, and  $\mathcal O_L$ and $\mathcal O_R$ are $\mf c$-orbits. Then
$$\bigl(\mu_L^{-1}(\mathcal O_L)\cap\mu_R^{-1}(\mathcal O_R)\bigr)/\mf c\subset M/\mf c$$
is a symplectic leaf.
\end{thm}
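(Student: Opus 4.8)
The plan is to deduce the statement from Theorem~\ref{thm:cred} by upgrading ``Poisson submanifold'' to ``symplectic leaf''. By that theorem $R:=\bigl(\mu_L^{-1}(\mathcal O_L)\cap\mu_R^{-1}(\mathcal O_R)\bigr)/\mf c$ is a Poisson submanifold of $M/\mf c$, and a Poisson submanifold is automatically a union of symplectic leaves of the ambient manifold. Hence it suffices to show that $R$ is connected and that the reduced bivector is \emph{nondegenerate} on $R$; a connected Poisson submanifold on which the structure is symplectic is a single symplectic leaf. Connectedness I would arrange from connectedness of the $\mf c$-orbits $\mathcal O_L,\mathcal O_R$ and of the leaves (otherwise one passes to connected components). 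Everything then reduces to the pointwise claim that, writing $M_0:=\mu_L^{-1}(\mathcal O_L)\cap\mu_R^{-1}(\mathcal O_R)$, $q\colon M\to M/\mf c$ for the quotient and $[x]=q(x)$,
\[
\on{Im}\bigl(\pi_{red}^\sharp\bigr)_{[x]}=T_{[x]}R\qquad(x\in M_0).
\]

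First I would compute both sides by lifting to $M$. Since $\mu_L,\mu_R$ are $\g$-equivariant, their fibres are the left and right leaves (the split-symplectic remark), and $\mu_L\times\mu_R$ is transverse to $\mathcal O_L\times\mathcal O_R$, one gets $T_x(\mu_L^{-1}(\mathcal O_L))=T^LM+\rho(\mf c)$ and $T_x(\mu_R^{-1}(\mathcal O_R))=T^RM+\rho(\mf c)$, whence
\[
T_xM_0=\bigl(T^LM+\rho(\mf c)\bigr)\cap\bigl(T^RM+\rho(\mf c)\bigr),\qquad T_{[x]}R=q_*(T_xM_0).
\]
On the other hand $\pi_{red}^\sharp\alpha=q_*\pi^\sharp(q^*\alpha)$, and $q^*$ identifies $T^*_{[x]}(M/\mf c)$ with $U:=\ann(\rho(\mf c))\subset T^*_xM$, so $\on{Im}(\pi_{red}^\sharp)_{[x]}=q_*\pi^\sharp(U)$.

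The bridge between the two computations is the hypothesis that $\mf c$ is Lagrangian. For $\xi\in U$ one has $\rho^*\xi\in\ann(\mf c)$, hence $t^\sharp\rho^*\xi\in\mf c$ because $t^\sharp(\ann\mf c)=\mf c^\perp=\mf c$; thus the symmetric correction $\half\,\rho(t^\sharp\rho^*\xi)$ lies in $\rho(\mf c)=\ker q_*$. Consequently $q_*\pi^\sharp\xi=q_*a_R(\xi)=-q_*a_L(\xi)$, where $a_R\xi=\sigma(\xi,\cdot)\in T^RM$ and $a_L\xi=\sigma(\cdot,\xi)\in T^LM$. In particular $a_R\xi+a_L\xi\in\rho(\mf c)$, so $a_L(\xi)\in T^LM\cap\bigl(T^RM+\rho(\mf c)\bigr)\subseteq T_xM_0$; this already yields the inclusion $\on{Im}(\pi_{red}^\sharp)_{[x]}=q_*a_L(U)\subseteq T_{[x]}R$, and shows that what remains is the equality $a_L(U)+\rho(\mf c)=T_xM_0$.

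The hard part is this reverse inclusion, i.e.\ nondegeneracy. Here I would exploit the nondegenerate pairing $\sigma^{-1}\colon T^LM\times T^RM\to\R$. A double-annihilator computation with this pairing gives $a_L(U)=\bigl(T^RM\cap\rho(\mf c)\bigr)^{\perp}\cap T^LM$ (orthogonal taken in $\sigma^{-1}$), so that $\dim a_L(U)=\dim T^LM-\dim\bigl(T^RM\cap\rho(\mf c)\bigr)$. Feeding this into the modular-law identity $T_xM_0=\rho(\mf c)+\bigl(T^LM\cap(T^RM+\rho(\mf c))\bigr)$ reduces the required equality of dimensions to two facts: $(\star)$ that $\rho(\mf c)$ is isotropic for $\sigma^{-1}$, i.e.\ $\sigma^{-1}\bigl(T^LM\cap\rho(\mf c),\,T^RM\cap\rho(\mf c)\bigr)=0$, and $(\star\star)$ that $\dim\bigl(T^LM+T^RM+\rho(\mf c)\bigr)=\dim T^LM+\dim\rho(\mf c)$. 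I expect $(\star)$ and $(\star\star)$ to be the main obstacle: this is precisely where the Lagrangian nature of $\mf c$ and of the stabilizers of the transitive $\g$-actions on $N_L,N_R$ must be used together, the former to make $\rho(\mf c)$ isotropic and the latter to fix the size of $T^LM+T^RM+\rho(\mf c)$. Once $(\star)$ and $(\star\star)$ are established the two dimension counts agree, $\on{Im}(\pi_{red}^\sharp)_{[x]}=T_{[x]}R$, and $R$ is a symplectic leaf.
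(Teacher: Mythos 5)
Your reduction of the theorem to a pointwise linear-algebra statement is sound, and everything up to and including the ``easy'' inclusion is correct: the identification $T_xM_0=(T^LM+\rho(\mf c))\cap(T^RM+\rho(\mf c))$, the observation that for $\xi\in U=\ann(\rho(\mf c))$ the symmetric correction $\half\rho(t^\sharp\rho^*\xi)$ lies in $\rho(\mf c)$ because $t^\sharp(\ann\mf c)=\mf c^\perp=\mf c$, and the resulting inclusion $\on{Im}(\pi_{red}^\sharp)_{[x]}\subseteq T_{[x]}R$ all check out, as does your double-annihilator formula $a_L(U)=(T^RM\cap\rho(\mf c))^{\perp}\cap T^LM$. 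But the proof stops exactly where the theorem's content begins: the reverse inclusion $a_L(U)+\rho(\mf c)\supseteq T_xM_0$ is deferred to the two claims $(\star)$ and $(\star\star)$, which you state without proof and yourself flag as ``the main obstacle.'' Neither is obvious ($(\star)$ is a genuine assertion about the interaction of $\rho(\mf c)$ with the pairing $\sigma^{-1}$, and $(\star\star)$ as written silently presupposes facts about $\dim\rho(\mf c)$ and about how $T^LM$, $T^RM$ and $\rho(\mf c)$ sit inside $T_xM$ that would themselves have to be extracted from the split-symplectic and Lagrangian hypotheses), and the final sentence ``once $(\star)$ and $(\star\star)$ are established the two dimension counts agree'' is asserted rather than carried out. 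So this is a genuine gap, not a routine omission.

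For comparison, the paper disposes of precisely this nondegeneracy step by a single application of Proposition \ref{prop:ndeg} from the Appendix, with $V=T^*_xM$, $W=\g^*$, $f=\rho^*$ and $C=\ann\mf c$: the hypothesis $\sigma(v,v)=\half t(f(v),f(v))$ is the statement that the symmetric part of $\sigma$ is $\half\rho^{\otimes2}(t)$, the hypothesis $V_L\cap\ker f=0$ is $T^{big}M=TM$, and the conclusion identifies $\ker(\sigma|_{U})$ with $(V_L\cap U)+(V_R\cap U)$, i.e.\ with the conormal space of $\mu_L^{-1}(\mathcal O_L)\cap\mu_R^{-1}(\mathcal O_R)$, which is exactly the equality $\on{Im}(\pi_{red}^\sharp)_{[x]}=T_{[x]}R$ you are after. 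The proof of that proposition avoids all case-by-case dimension bookkeeping by embedding $V$ as a Lagrangian subspace of $V/V_L\oplus V/V_R\oplus W$ for the split symmetric form $(\cdot,\cdot)_{sym}\oplus(-t)$ and using that compositions of Lagrangian relations are Lagrangian, followed by the elementary Proposition \ref{prop:ndeg-aux}. I would recommend either adopting that argument, or, if you want to keep your route, actually proving $(\star)$ and a corrected, invariantly formulated version of $(\star\star)$ and writing out the dimension count in full --- at which point you will essentially have reproved the appendix lemma in coordinates adapted to $T^LM\times T^RM$. (The connectedness caveat in your first paragraph is harmless; the paper implicitly works with connected components as well.)
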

\begin{proof}
It follows from Proposition \ref{prop:ndeg} in Appendix, where we set $V=T^*_xM$,  $W=\g^*$, $f=\rho^*$, and $C=\on{Ann}\mf c$.
\end{proof}

\begin{example}
Let $\mathcal O\subset G$ be a conjugacy class. Let us consider the split-symplectic central pair
$$
\begin{tikzcd}[column sep=small]
& m^{-1}(\mathcal O) \arrow{dl}[swap]{p_1}\arrow{dr}{p_2} & \\
G & & G
\end{tikzcd}
$$
defined in Example \ref{ex:main}. Notice that $\g_{diag}\subset\g\oplus\bar \g$ is a Lagrangian Lie subalgebra. Let us choose a $\g_{diag}$-orbit, i.e.\ a conjugacy class, in each $G$; we shall denote them $\mathcal O_L$ and $\mathcal O_R$ as above. Central reduction gives us a symplectic form on (the non-singular part of)
$$\{(g_1,g_2,g_3)\in\mathcal O_L\times\mathcal O_R\times\mathcal O;\,g_1=g_3g_2\}/G$$
where $G$ acts by conjugation. This symplectic space can be identified with the moduli space of flat $G$-bundles over a sphere with 3 punctures, with the holonomies around the punctures in $\mathcal O_L$, $\mathcal O_R$, $\mathcal O$.

\end{example}

\subsection*{Partial reduction}
The reduction by coisotropic Lie algebra we described above is a special case of a reduction of quasi-Poisson manifolds to quasi-Poisson manifolds. 

For any Lie subalgebra $\mf c\subset\g$ let
$$\mf c^\perp=t(\on{Ann}\mf c,\cdot)\subset\g.$$
The Lie subalgebra $\mf c$ is coisotropic iff $\mf c^\perp\subset\mf c$; in that case $\mf c^\perp\subset\mf c$ is an ideal. The element $t\in S^2\g$ descends  to an element $t'\in S^2(\mf c/\mf c^\perp)\subset S^2(\g/\mf c^\perp)$ via the projection $S^2\g\to S^2(\g/\mf c^\perp)$. The element $\phi\in\bigwedge^3\g$ descend in the same way to the element $\phi'\in\bigwedge^3(\mf c/\mf c^\perp)$ corresponding to $t'$. The same applies if we replace $\mf c/\mf c^\perp$ with $\mf c/\h$ for any ideal $\h\subset\mf c $ containing $\mf c^\perp$.
As a result we get the following.
\begin{prop}[\cite{lb-se}]\label{prop:pred}
If $M$ is a $\g$-quasi-Poisson manifold, $\mf c\subset\g$ a coisotropic Lie subalgebra, and $\h\subset\mf c$ an ideal such that $\mf c^\perp\subset\h$, and if $M/\h$ is a manifold, then $M/\h$ is $\mf c/\h$-quasi-Poisson, with the bivector field pushed-forward from $M$.
\end{prop}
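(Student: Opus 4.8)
The plan is to push the whole structure forward along the quotient map $p\colon M\to M/\h$, whose fibres are the $\h$-orbits, and to verify the three ingredients of a $\mf c/\h$-quasi-Poisson manifold one at a time: the action, the bivector, and the defining equation. I would first descend the action. Since $\h$ is an ideal of $\mf c$, we have $[\mf c,\h]\subseteq\h$, so $\rho(\mf c)$ preserves the vertical distribution $\ker dp=\rho(\h)$; hence every $\rho(u)$ with $u\in\mf c$ is $p$-projectable, while $\rho(\h)$ projects to $0$. This produces a Lie algebra action $\bar\rho$ of $\mf c/\h$ on $M/\h$, characterized by $dp_x\,\rho(u)_x=\bar\rho(u+\h)_{p(x)}$ for $u\in\mf c$.

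Next I would push $\pi$ forward. As $\pi$ is $\g$-invariant we have $\mathcal L_{\rho(v)}\pi=0$ for all $v\in\h$, and since the (connected) fibres of $p$ are exactly the $\h$-orbits this makes $\pi$ projectable: for basic one-forms $p^*\bar\alpha,p^*\bar\beta$ one checks $\rho(v)\cdot\pi(p^*\bar\alpha,p^*\bar\beta)=0$ using $\mathcal L_{\rho(v)}\pi=0$ and $\mathcal L_{\rho(v)}p^*\bar\alpha=0$, so $\pi(p^*\bar\alpha,p^*\bar\beta)$ is a pull-back and $\bar\pi:=p_*\pi$ is a well-defined bivector. It is automatically $\mf c/\h$-invariant, since $\rho(u)$ is $p$-related to $\bar\rho(u+\h)$ and $\mathcal L_{\rho(u)}\pi=0$ for $u\in\mf c$.

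The main step is the quasi-Poisson equation. By naturality of the Schouten bracket under push-forward of $p$-related multivector fields, $[\bar\pi,\bar\pi]=p_*[\pi,\pi]=2\,p_*\rho^{\otimes3}(\phi)$, so it remains to identify $p_*\rho^{\otimes3}(\phi)$ with $\bar\rho^{\otimes3}(\phi')$. Pointwise, the linear map $dp_x\circ\rho\colon\g\to T_{p(x)}(M/\h)$ annihilates $\h$ and hence factors through $\g/\h$; consequently $(dp_x)^{\otimes3}\rho^{\otimes3}(\phi)$ depends only on the image of $\phi$ under $\bigwedge^3\g\to\bigwedge^3(\g/\h)$. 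By the discussion preceding the proposition this image is precisely $\phi'\in\bigwedge^3(\mf c/\h)$, and on $\mf c/\h$ the factored map agrees with $\bar\rho(\cdot)_{p(x)}$. Therefore $p_*\rho^{\otimes3}(\phi)=\bar\rho^{\otimes3}(\phi')$, which gives $[\bar\pi,\bar\pi]/2=\bar\rho^{\otimes3}(\phi')$, exactly the defining identity for $M/\h$ as a $\mf c/\h$-quasi-Poisson manifold.

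I expect the trivector identity $p_*\rho^{\otimes3}(\phi)=\bar\rho^{\otimes3}(\phi')$ to be the only real obstacle. The difficulty is that $\phi$ has components along a vector-space complement of $\mf c$ in $\g$, whose fundamental vector fields are \emph{not} $p$-projectable, so one cannot push $\rho^{\otimes3}(\phi)$ forward leg by leg. The point — and this is precisely where the hypothesis $\mf c^\perp\subseteq\h$ enters — is that these components drop out: coisotropy kills the image of $\phi$ in $\bigwedge^3(\g/\mf c)$, and together with $\mf c^\perp\subseteq\h$ this forces the image of $\phi$ in $\bigwedge^3(\g/\h)$ to lie in $\bigwedge^3(\mf c/\h)$, while any remaining legs along $\h$ are annihilated by $dp$. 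Once this descent of $\phi$ is granted, the rest is the routine push-forward bookkeeping outlined above.
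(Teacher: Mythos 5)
Your argument is correct and is essentially the paper's own: the paper gives no proof beyond the preceding observation that $t$ and $\phi$ descend to $t'\in S^2(\mf c/\h)$ and $\phi'\in\bigwedge^3(\mf c/\h)$, and your proposal is exactly that observation together with the routine projectability bookkeeping (descent of the action since $\h$ is an ideal in $\mf c$, descent of $\pi$ by $\h$-invariance, naturality of the Schouten bracket, and the factorization of $dp\circ\rho$ through $\g/\h$). The one loosely argued point is your heuristic for why the image of $\phi$ in $\bigwedge^3(\g/\h)$ lands in $\bigwedge^3(\mf c/\h)$ --- the clean reason is that $\mf c^\perp\subset\h$ forces $t^\sharp(\on{Ann}\h)\subset\mf c$, so in $\phi(\alpha,\beta,\gamma)=\frac{1}{4}\bigl\langle[t^\sharp\alpha,t^\sharp\beta],\gamma\bigr\rangle$ two legs from $\on{Ann}\h$ produce an element of $[\mf c,\mf c]\subset\mf c$, which is killed by any third leg in $\on{Ann}\mf c$ --- but since you defer to the paper's own stated descent of $\phi$, you are on the same footing as the paper itself.
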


A useful particular case is when $\g$ is a direct sum $\g=\g_1\oplus\g_2$ with $t=t_1+t_2$, $t_i\in(S^2\g_i)^{\g_i}$, and $\mf c_1\subset\g_1$ is a coisotropic Lie subalgebra. If we set $\h =\mf c_1$ and $\mf c=\mf c_1\oplus\g_2$ then  Proposition \ref{prop:pred}
 says that $M/\mf c_1$ is $\g_2$-quasi-Poisson.

Theorems \ref{thm:cred} and \ref{thm:scred} have now the following extensions (we omit the proofs, as they are very similar).

\begin{thm}[Central reduction 2]\label{thm:cred2}
Suppose that \eqref{eq:cpair} is a central pair. Let $\mf c\subset\g$ be a coisotropic Lie subalgebra and $\h\subset\mf c$ an ideal containing $\mf c^\perp$. Let $\mathcal O_L\subset N_L$ and $\mathcal O_R\subset N_R$ be $\mf c$-invariant submanifolds. If $\mu_L\times\mu_R$ is transverse to $\mathcal O_L\times \mathcal O_R$ and if $M/\h$, $\mathcal O_L/\h$ and $\mathcal O_R/\h$ are manifolds then
$$M':=\bigl(\mu_L^{-1}(\mathcal O_L)\cap\mu_R^{-1}(\mathcal O_R)\bigr)/\h\subset M/\h$$
is a $\mf c/\h$-quasi-Poisson submanifold, and
$$
\begin{tikzcd}[column sep=small]
& M' \arrow{dl}\arrow{dr} & \\
\mathcal O_L/\h & & \mathcal O_R/\h
\end{tikzcd}
$$
is a central pair.
\end{thm}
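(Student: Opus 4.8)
The plan is to combine the partial reduction of Proposition~\ref{prop:pred} with the tangency/centrality argument behind Theorem~\ref{thm:cred}. Write $\widetilde M':=\mu_L^{-1}(\mathcal O_L)\cap\mu_R^{-1}(\mathcal O_R)=(\mu_L\times\mu_R)^{-1}(\mathcal O_L\times\mathcal O_R)$. By the transversality hypothesis this is a submanifold of $M$, and since $\mu_L,\mu_R$ are $\g$-equivariant and $\mathcal O_L,\mathcal O_R$ are $\mf c$-invariant, $\widetilde M'$ is $\mf c$-invariant, in particular $\h$-invariant; as $\rho(\h)\subset T\widetilde M'$ and $M/\h$ is a manifold, $M'=\widetilde M'/\h$ is a submanifold of $M/\h$. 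By Proposition~\ref{prop:pred}, $M/\h$ carries a $\mf c/\h$-quasi-Poisson structure with bivector $\pi_{red}$ pushed forward from $\pi$, and I would first record the elementary compatibility that for covectors $\bar\alpha,\bar\beta$ on $M/\h$ pulled back to $\alpha,\beta\in T^*M$ (hence annihilating $\rho(\h)$) one has $\sigma_{red}(\bar\alpha,\bar\beta)=\sigma(\alpha,\beta)$; this reduces to the identity $t'([\xi],[\eta])=t(\xi,\eta)$ for $\xi,\eta\in\on{Ann}\h$, which holds because $t^\sharp(\on{Ann}\mf c)=\mf c^\perp\subset\h$.

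The core step is to show that $M'$ is a quasi-Poisson submanifold, i.e.\ that $\pi_{red}$ is tangent to $M'$; since $M'$ is $\mf c/\h$-invariant this suffices. Using the tangency criterion, I would check that $\pi_{red}^\sharp(\bar\alpha)\in TM'$ for every conormal $\bar\alpha$ to $M'$. By transversality the conormal bundle of $\widetilde M'$ is $\mu_L^*(N^*\mathcal O_L)+\mu_R^*(N^*\mathcal O_R)$, and these covectors already annihilate $\rho(\h)$, so it is enough to treat $\alpha=\mu_L^*\nu$ with $\nu$ conormal to $\mathcal O_L$ (the $\mathcal O_R$ case being symmetric). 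Left-centrality of $\mu_L$ gives $\sigma(\alpha,\cdot)=0$, whence $\pi^\sharp\alpha=\pi(\alpha,\cdot)=-\half\,\rho^{\otimes 2}(t)(\alpha,\cdot)=\rho(w)$ with $w=-\half\,t^\sharp(\rho^*\alpha)$. Here $\rho^*\alpha\in\on{Ann}\h$, and the algebraic fact $t^\sharp(\on{Ann}\h)\subset\mf c$ — which follows from $\mf c^\perp\subset\h\subset\mf c$ via the computation $t(\on{Ann}\h,\on{Ann}\mf c)=0$ — shows $w\in\mf c$. Hence $\pi_{red}^\sharp(\bar\alpha)=dp(\rho(w))=\rho_{red}(w+\h)$ is an infinitesimal generator of the $\mf c/\h$-action, so it is tangent to the $\mf c/\h$-invariant submanifold $M'$, and $M'$ is $\mf c/\h$-quasi-Poisson.

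It remains to see that $M'\to\mathcal O_L/\h$ and $M'\to\mathcal O_R/\h$ form a central pair. Left-centrality of the descended map $\bar\mu_L$ is immediate from the compatibility above: a function pulled back by $\bar\mu_L$ lifts to $\mu_L^*f_L$ on $M$, whose differential $\mu_L^*(df_L)$ is $\sigma$-annihilating by left-centrality of $\mu_L$, so $\sigma_{red}(d(\bar\mu_L^*f),\cdot)=0$ on $M'$; the right case is symmetric. The genuine obstacle is to verify that the targets $\mathcal O_L/\h,\mathcal O_R/\h$ are $\mf c/\h$-quasi-Poisson-commutative, which (their reduced bivector being zero) amounts to showing the stabilizers are coisotropic. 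A direct computation identifies the stabilizer of $[y]\in\mathcal O_L/\h$ with $\mf s=((\mf c\cap\g_y)+\h)/\h$, and coisotropy of $\mf s$ in $\mf c/\h$ reduces to the vanishing of $t$ on $\on{Ann}\big((\mf c\cap\g_y)+\h\big)$. I expect this bilinear identity — which uses the coisotropy of the stabilizer $\g_y$ in $N_L$ together with the chain $\mf c^\perp\subset\h\subset\mf c$, by decomposing $\on{Ann}(\mf c\cap\g_y)=\on{Ann}\mf c+\on{Ann}\g_y$ and checking each term — to be the main technical point; the remaining manifold-theoretic verifications run parallel to those in Theorems~\ref{thm:cred} and \ref{thm:scred}.
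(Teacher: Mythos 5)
The paper itself omits the proof of this theorem (it only says the argument is ``very similar'' to those of Theorems \ref{thm:cred} and \ref{thm:scred}), and your overall strategy --- Proposition \ref{prop:pred} for the quotient structure, the centrality computation of Theorem \ref{thm:cred} adapted to the partial quotient by $\h$, and a direct coisotropy check for the reduced stabilizers --- is exactly the intended adaptation. Your final sketch of the coisotropy of $((\mf c\cap\g_y)+\h)/\h$ is also correct and completable as you describe (the cross terms vanish because $t^\sharp(\on{Ann}\mf c)=\mf c^\perp\subset\h$ pairs to zero with $\on{Ann}\h$).

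There is, however, a genuine flaw in your core step. The criterion you invoke --- that $\pi_{red}^\sharp(\bar\alpha)\in TM'$ for every conormal $\bar\alpha$ --- is necessary but not sufficient for $M'$ to be a quasi-Poisson submanifold. For a skew bivector and a subspace $W\subset V$, the condition $i_\alpha\pi\in W$ for all $\alpha\in\on{Ann}W$ only kills the $\bigwedge^2(V/W)$-component of $\pi$ and still permits a nonzero component in $W\wedge(V/W)$ (take $\pi=e_1\wedge e_3$ and $W=\on{span}(e_1,e_2)$ in $\R^4$); in that case $\pi$ does not restrict to a bivector field on the submanifold, so $M'$ would not carry the $\mf c/\h$-quasi-Poisson structure that the theorem asserts and that the central-pair conclusion requires. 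What you need is $\pi_{red}^\sharp(\bar\alpha)=0$, and your own computation gives this after one more observation: for $\alpha=\mu_L^*\nu$ with $\nu$ conormal to the $\mf c$-invariant $\mathcal O_L$, equivariance yields $\langle\rho^*\alpha,u\rangle=\langle\nu,\rho_{N_L}(u)\rangle=0$ for every $u\in\mf c$, so $\rho^*\alpha\in\on{Ann}\mf c$ and not merely $\on{Ann}\h$. Hence $w=-\half\,t^\sharp(\rho^*\alpha)\in t^\sharp(\on{Ann}\mf c)=\mf c^\perp\subset\h$, so $dp(\rho(w))=0$ and $\pi_{red}^\sharp(\bar\alpha)=0$ as required. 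With this sharpening (and its mirror image for $\mathcal O_R$), the rest of your argument --- the descent of $\sigma$ and the left/right centrality of the induced maps --- goes through.
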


\begin{thm}[Split-symplectic central reduction 2]\label{thm:sscr2}
If, in the context of Theorem \ref{thm:cred2}, the central pair \eqref{eq:cpair} is split symplectic,  if $\h=\mf c^\perp$, and if $\mathcal O_L$ and $\mathcal O_R$ are $\mf c$-orbits, then
$$
\begin{tikzcd}[column sep=small]
& M' \arrow{dl}\arrow{dr} & \\
\mathcal O_L/\mf c^\perp & & \mathcal O_R/\mf c^\perp
\end{tikzcd}
$$
is a split-symplectic central pair.
\end{thm}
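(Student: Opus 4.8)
The plan is to check the three defining properties of a split-symplectic central pair for the residual data $(\mf c/\mf c^\perp, t')$. Theorem~\ref{thm:cred2} with $\h=\mf c^\perp$ already provides that $M'$ is $\mf c/\mf c^\perp$-quasi-Poisson and that the displayed diagram is a central pair, so it remains to verify: (i) non-degeneracy of $t'\in S^2(\mf c/\mf c^\perp)$; (ii) $T^{big}M'=TM'$; and (iii) transitivity of the $\mf c/\mf c^\perp$-actions on $\mathcal O_L/\mf c^\perp$ and $\mathcal O_R/\mf c^\perp$ with Lagrangian stabilizers. I would dispose of (i) and (iii) by linear algebra and treat (ii) as the genuine content, reducing it to Proposition~\ref{prop:ndeg} as in Theorem~\ref{thm:scred}.

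Property (i) is the same computation used to define $t'$: since $t$ is non-degenerate and $\mf c^\perp\subset\mf c$, the radical of $t|_{\mf c}$ is $\mf c\cap\mf c^\perp=\mf c^\perp$, so $t$ descends to a non-degenerate form on $\mf c/\mf c^\perp$. For (iii), transitivity is immediate because $\mathcal O_L$ is a single $\mf c$-orbit and $\mf c^\perp$ is an ideal of $\mf c$ (and similarly for $\mathcal O_R$). To identify the stabilizers, fix $y\in\mathcal O_L$ with (Lagrangian) stabilizer $\mf l=\on{stab}_\g(y)$; a direct computation, using the modular law $\mf c\cap(\mf l+\mf c^\perp)=(\mf c\cap\mf l)+\mf c^\perp$ (valid because $\mf c^\perp\subset\mf c$), shows that the stabilizer of $[y]$ in $\mf c/\mf c^\perp$ is the reduced subspace $R(\mf l):=\bigl((\mf c\cap\mf l)+\mf c^\perp\bigr)/\mf c^\perp$. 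It is a subalgebra since $\mf c\cap\mf l$ is one and $\mf c^\perp$ is an ideal. The key lemma is that coisotropic reduction commutes with orthogonal complement, $R(\mf l)^{\perp_{t'}}=R(\mf l^\perp)$, which follows from $(\mf c^\perp)^\perp=\mf c$, from $(\mf c\cap\mf l)^\perp=\mf c^\perp+\mf l^\perp$, and one more use of the modular law. Applying it to $\mf l^\perp=\mf l$ gives $R(\mf l)^{\perp_{t'}}=R(\mf l)$, so $R(\mf l)$ is Lagrangian.

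The main obstacle is (ii), the single big leaf condition. Dually, $T^{big}M'=TM'$ amounts to the vanishing of $\ker\rho'^*$ intersected with the left kernel of $\sigma'$, where one uses that $\pi'$ and $\sigma'$ agree on $\ker\rho'^*$ because the symmetric correction $\frac12\rho'^{\otimes2}(t')$ vanishes there. Lifting a covector in this intersection to $\alpha\in T_x^*M$ that descends through the $\mf c^\perp$-quotient and is conormal-compatible with $\mu_L^{-1}(\mathcal O_L)\cap\mu_R^{-1}(\mathcal O_R)$, the two vanishing conditions become the hypotheses of Proposition~\ref{prop:ndeg} applied with $V=T_x^*M$, $W=\g^*$, $f=\rho^*$, and $C=\on{Ann}\mf c$; left-centrality of $\mu_L$ and right-centrality of $\mu_R$ enter exactly as in the proof of Theorem~\ref{thm:scred}, while transversality of $\mu_L\times\mu_R$ to $\mathcal O_L\times\mathcal O_R$ guarantees that the relevant subquotient of $T_x^*M$ models $T_{[x]}^*M'$. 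The essential difference from Theorem~\ref{thm:scred}, and the reason this step is the crux, is that $\mf c$ is here only coisotropic, so that $C=\on{Ann}\mf c$ is isotropic but not Lagrangian in $\g^*$; this leaves a residual non-degenerate block of dimension $\dim(\mf c/\mf c^\perp)$, namely the residual $\mf c/\mf c^\perp$-directions that must remain inside the big leaf. Thus Proposition~\ref{prop:ndeg} delivers both $T^{big}M'=TM'$ and the non-degenerate pairing $\sigma'^{-1}$ characteristic of a split-symplectic structure, in contrast to Theorem~\ref{thm:scred} where $C$ is Lagrangian and no residual block survives. With (i)--(iii) in hand, the reduced diagram is a split-symplectic central pair.
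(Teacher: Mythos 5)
The paper gives no proof of Theorem \ref{thm:sscr2} beyond the remark that it is ``very similar'' to the preceding ones, so your proposal has to be measured against the proof of Theorem \ref{thm:scred}, which consists of a single application of Proposition \ref{prop:ndeg} with $C=\on{Ann}\mf c$ Lagrangian. Your decomposition into (i)--(iii) is the right one, and your treatments of (i) and (iii) are correct: the modular-law computation identifying the stabilizer of $[y]$ with $\bigl((\mf c\cap\mf l)+\mf c^\perp\bigr)/\mf c^\perp$, and the verification that passing to this reduced subalgebra commutes with $t$-orthogonal complement, are exactly what is needed and they check out.

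The gap is in (ii), and you have in effect flagged it yourself without closing it. Proposition \ref{prop:ndeg} is stated, and proved, only for Lagrangian $C$: its proof rests on the composition $F(V)\circ C$ being a Lagrangian relation and on Proposition \ref{prop:ndeg-aux}, which is a statement about $(\cdot,\cdot)_{sym}$-Lagrangian subspaces. With $C=\on{Ann}\mf c$ merely isotropic neither step applies, so the sentence ``Thus Proposition \ref{prop:ndeg} delivers both\dots'' is an assertion, not a deduction. Moreover, the linear-algebra statement you actually need is not of the form $\ker\bigl(\sigma|_{f^{-1}(C)}\bigr)$ for a single subspace: a covector on $M'$ lifts to $f^{-1}(\on{Ann}\mf c^\perp)$ (covectors killing $\rho(\mf c^\perp)$), while $\ker\rho'^*$ lifts to the smaller space $f^{-1}(\on{Ann}\mf c)$, so what must be computed is the kernel of the pairing between $f^{-1}(\on{Ann}\mf c)$ and $f^{-1}(\on{Ann}\mf c^\perp)$, and one must show it equals $V_L\cap f^{-1}(\on{Ann}\mf c)+V_R\cap f^{-1}(\on{Ann}\mf c)$, i.e.\ exactly the conormal directions of $\mathcal O_L\times\mathcal O_R$ that get quotiented away. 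This is a genuine generalization of Proposition \ref{prop:ndeg} (recovered when $\mf c^\perp=\mf c$). It should be provable by the same technique --- the images of $f^{-1}(\on{Ann}\mf c)$ and $f^{-1}(\on{Ann}\mf c^\perp)$ in $V/V_L\oplus V/V_R$ form an isotropic/coisotropic orthogonal pair, and one needs the corresponding variant of Proposition \ref{prop:ndeg-aux} --- but you must state and prove that lemma; as written, step (ii), which you correctly identify as the crux, is not established.
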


\begin{example}
Let us consider again the split-symplectic $\g\oplus\bar\g$-central pair
$$
\begin{tikzcd}[column sep=small]
& m^{-1}(\mathcal O) \arrow{dl}[swap]{p_1}\arrow{dr}{p_2} & \\
G & & G
\end{tikzcd}
$$
defined in Example \ref{ex:main}. Suppose that $\g$ is semisimple and that  $\mf b\subset\g$ is a Borel subalgebra. We shall reduce this central pair by the coisotropic subalgebra $\mf c:=\mf b\oplus\mf b\subset\g\oplus\bar\g$. Notice that $\mf c^\perp=\mf n\oplus\mf n$, where $\mf n\subset\mf b$ is the nilpotent radical, and that $\mf c/\mf c^\perp=\mf t\oplus\bar{\mf t}$, where $\mf t\subset\g$ is the Cartan subalgebra.

Let us observe that $\mf c$-orbits in $G$ give us the Bruhat decomposition of $G$: for any element $w$ of the Weyl group $W_G$ we have the orbit $\mathcal O_w=BwB\subset G$, where $B$ is the Borel subgroup. 

We can identify the quotient $\mathcal O_w/\mf c^\perp$ with $Tw\subset G$, where $T$ is the Cartan subgroup. As a result, for any pair $w_1,w_2\in W_G$ Theorem \ref{thm:sscr2} gives us a split-symplectic $\mf t\oplus\bar{\mf t}$-central pair
$$
\begin{tikzcd}[column sep=small]
& M_{w_1,w_2} \arrow{dl}[swap]{p_1}\arrow{dr}{p_2} & \\
Tw_1 & & Tw_2
\end{tikzcd}
$$
As $\mf t\oplus\bar{\mf t}$ is Abelian, the quasi-Poisson structure on $M_{w_1,w_2}$ is actually Poisson.
\end{example}

\section{Fusion}\label{sec:fusion}
If $M_1$ and $M_2$ are $\g$-quasi-Poisson manifolds then their product $M_1\times M_2$ (with $\pi_1+\pi_2$) is $\g\oplus\g$-quasi-Poisson. To make it $\g$-quasi-Poisson, where  $\g\hookrightarrow\g\oplus\g$ is the diagonal inclusion, one needs to use a twist (the reason is that $\g_\text{diag}\subset\g\oplus\g$ is not a quasi-Lie sub-bialgebra, but becomes so after the twist). The result is as follows. 
\begin{defn}[\cite{a-ks-m}]
The \emph{fusion product} $M_1\circledast M_2$ of two $\g$-quasi-Poisson manifolds $(M_1,\rho_1,\pi_1)$ and $(M_2,\rho_2,\pi_2)$ is the manifold $M_1\times M_2$ with the $\g$-quasi-Poisson structure given by the diagonal action of $\g$ and by the bivector field
\begin{equation}\label{eq:fusion-pi}
\pi_\circledast=\pi_1+\pi_2-\frac{1}{2}(\rho_1\wedge\rho_2)(t).
\end{equation}
\end{defn}

Let us notice that the corresponding tensor field $\sigma_\circledast$ on $M_1\circledast M_2$ is given by
\begin{equation}\label{eq:fusion-si}
\sigma_\circledast=\sigma_1+\sigma_2+(\rho_2\otimes\rho_1)(t).
\end{equation}

The fusion product makes the category of $\g$-quasi-Poisson manifolds to a (non-braided) monoidal category. 

\begin{rem}
Monoids in a braided monoidal category form a (non-braided) monoidal category:  if $A$ and $B$ are monoids  then $A\otimes B$ is a monoid as well, with the product
$$(A\otimes B)\otimes(A\otimes B)\to A\otimes B$$
given by the diagram
$$
\begin{tikzpicture}[baseline=1cm]
\coordinate (diff) at (0.5,0);
\coordinate (dy) at (0,0.5);
\node(A1) at (0,0) {$A$};
\node(B1) at ($(A1)+(diff)$) {$B$};
\node(A2) at (2,0) {$A$};
\node(B2) at ($(A2)+(diff)$) {$B$};
\node(A3) at ($(A1)+(0,2.5)+0.5*(diff)$) {$A$};
\node(B3) at ($(A2)+(0,2.5)+0.5*(diff)$) {$B$};
\coordinate(A) at ($(A3)-(dy)$);
\coordinate(B) at ($(B3)-(dy)$);
\draw (A2)..controls +(0,1) and +(0,-1)..(A);
\draw [line width=1ex,white](B1)..controls +(0,1) and +(0,-1)..(B);
\draw (B1)..controls +(0,1) and +(0,-1)..(B);
\draw (A1)..controls +(0,1) and +(0,-1)..(A);
\draw (B2)..controls +(0,1) and +(0,-1)..(B);
\draw (B)--(B3);
\draw(A)--(A3);
\end{tikzpicture}
$$
As observed in \cite{qpl}, the fusion product is the semi-classical limit of the tensor product of monoids in the braided monoidal category $U\g\text{-mod}^\Phi$. A similar observation holds for quasi-Poisson algebras in any infinitesimally braided category.
\end{rem}

Slightly more generally, suppose that $\g$ has an invariant element $t_\g\in(S^2\g)^\g$ and that $\h$ is another Lie algebra with an element $t_\h\in(S^2\h)^\h$. For the Lie algebra $\g\oplus\g\oplus\h$ we use the element $t_{\g\oplus\g\oplus\h}=t_\g\oplus t_\g\oplus t_\h$ and for $\g\oplus\h$ we use $t_{\g\oplus\h}=t_\g\oplus t_\h$.

\begin{defn}[\cite{a-ks-m}]
If $(M,\rho,\pi)$ is a $\g\oplus\g\oplus\h$-quasi-Poisson manifold then the \emph{(internal) fusion} of $M$ is $M$ with the $\g\oplus\h$-quasi-Poisson structure given by
$$\rho_\circledast(u,v)=\rho(u,u,v)\quad(\forall u\in\g,v\in\h)$$
and
$$\pi_\circledast=\pi-\frac{1}{2}(\rho_1\wedge\rho_2)(t_\g)$$
where $\rho_1$ and $\rho_2$ are the actions of the first and of the second $\g$ in $\g\oplus\g\oplus\h$ respectively. 
\end{defn}
Again, $\sigma_\circledast$ is given by 
\begin{equation}\label{eq:fus}
\sigma_\circledast=\sigma+(\rho_2\otimes\rho_1)(t_\g).
\end{equation}

Fusion is compatible with central maps in the following way.
\begin{thm}\label{thm:fus}
Let
$$
\begin{tikzcd}[column sep=small]
& M_1 \arrow{dl}[swap]{\mu_L}\arrow{dr}{\mu_R} & \\
N & & N'
\end{tikzcd}
\begin{tikzcd}[column sep=small]
& M_2 \arrow{dl}[swap]{\nu_L}\arrow{dr}{\nu_R} & \\
N' & & N''
\end{tikzcd}
$$
be central pairs of $\g$-quasi-Poisson manifolds.
 If $\mu_R\times\nu_L$ is transverse to the diagonal $N'_{diag}\subset N'\times N'$ (in particular, if one of $\mu_R$, $\nu_L$ is a submersion) then the fibre product
$$M_1\times_{N'} M_2\subset M_1\times M_2$$
over $\mu_R$ and $\nu_L$ is a $\g$-quasi-Poisson submanifold of $M_1\circledast M_2$ and
$$
\begin{tikzcd}[column sep=small]
& M_1\times_{N'} M_2 \arrow{dl}[swap]{\mu_L}\arrow{dr}{\nu_R} & \\
N & & N''
\end{tikzcd}
$$
is a central pair.
\end{thm}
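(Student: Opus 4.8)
The plan is to verify directly that the fibre product $S:=M_1\times_{N'}M_2$ is a quasi-Poisson submanifold of $M_1\circledast M_2$, and then that $\mu_L$ and $\nu_R$ restrict to a left and a right central map on it. First I would dispose of the structural points. Transversality of $\mu_R\times\nu_L$ to $N'_{diag}$ makes $S$ a submanifold, and since central maps are in particular $\g$-equivariant, the condition $\mu_R(m_1)=\nu_L(m_2)$ is preserved by the diagonal action, so $S$ is $\g$-invariant and $\rho_\circledast(\g)$ is tangent to $S$. Because $\pi_\circledast$ and $\sigma_\circledast$ differ by $\frac12\rho_\circledast^{\otimes 2}(t)$, which sends every covector into $\rho_\circledast(\g)\subset TS$, showing that $\pi_\circledast$ is tangent to $S$ is equivalent to showing that $\sigma_\circledast$ annihilates the conormal bundle of $S$.

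The core of the argument is this conormal computation. By transversality the conormal space of $S$ at $(m_1,m_2)$ is spanned by the covectors $\alpha_\gamma=\bigl(d(\mu_R^*\gamma),\,-d(\nu_L^*\gamma)\bigr)\in T^*M_1\oplus T^*M_2$, obtained by pulling back the conormal $(d\gamma,-d\gamma)$ of $N'_{diag}$, with $\gamma$ ranging over $C^\infty(N')$. Using \eqref{eq:fusion-si} I would evaluate $\sigma_\circledast(\alpha_\gamma,\cdot)$ slot by slot. The $M_2$-slot gives $-\sigma_2(d(\nu_L^*\gamma),\cdot)$, which vanishes because $\nu_L$ is left central. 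The $M_1$-slot gives $\sigma_1(d(\mu_R^*\gamma),\cdot)$ plus the contribution of the cross term $(\rho_2\otimes\rho_1)(t)$. Right-centrality of $\mu_R$ reads $\sigma_1(\cdot,d(\mu_R^*\gamma))=0$; combined with $\sigma_1=\pi_1+\frac12\rho_1^{\otimes2}(t)$ (antisymmetry of $\pi_1$ versus symmetry of the correction term) and the equivariance $d\mu_R\circ\rho_1=\rho_{N'}$, this forces $\sigma_1(d(\mu_R^*\gamma),\cdot)=\rho_1(w)$, where $w=\sum_{ij}t^{ij}\langle d\gamma,\rho_{N'}(e_i)\rangle\,e_j$. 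The cross term, now by equivariance of $\nu_L$, evaluates to exactly $-\rho_1(w)$, so the two cancel and $\sigma_\circledast(\alpha_\gamma,\cdot)=0$. The mirror computation gives $\sigma_\circledast(\cdot,\alpha_\gamma)=0$: here the $M_1$-slot $\sigma_1(\cdot,d(\mu_R^*\gamma))$ vanishes by right-centrality, while the $M_2$-slot contributes $-\rho_2(w)$ (by left-centrality of $\nu_L$ and the same symmetric-part identity) and is cancelled by the fusion cross term. I expect this cancellation — keeping track of which factor each of $\rho_1,\rho_2,\rho_{N'}$ acts on, and matching the symmetric correction terms against the fusion term — to be the main obstacle, essentially the only place where all the hypotheses (both centralities, equivariance, and the precise form of $\sigma_\circledast$) are used at once.

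Since $\sigma_\circledast$ kills the conormal bundle of $S$ from both sides, it descends to a tensor on $S$; in particular $\pi_\circledast$ is tangent and restricts to a bivector $\pi_S$. As $\rho_\circledast$ and $\pi_\circledast$ are both tangent to $S$, the identity $[\pi_\circledast,\pi_\circledast]/2=\rho_\circledast^{\otimes3}(\phi)$ restricts verbatim, so $S$ is a $\g$-quasi-Poisson submanifold. Finally I would check that $\mu_L$ and $\nu_R$ form a central pair on $S$. They are equivariant maps into quasi-Poisson-commutative targets, so only centrality of their pullbacks remains. For $f\in C^\infty(N)$ the covector $d(\mu_L^*f)$ has vanishing $M_2$-component, so the fusion cross term of $\sigma_\circledast(d(\mu_L^*f),\cdot)$ — which pairs that covector against $\rho_2(\g)\subset TM_2$ — drops out, leaving $\sigma_1(d(\mu_L^*f),\cdot)=0$ by left-centrality of $\mu_L$ on $M_1$; thus $\mu_L$ is left central already on $M_1\circledast M_2$, hence on $S$ after restricting $\sigma_\circledast$ to $\sigma_S$. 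The symmetric argument, using that $d(\nu_R^*f)$ has vanishing $M_1$-component and that $\nu_R$ is right central on $M_2$, shows $\nu_R$ is right central, completing the proof.
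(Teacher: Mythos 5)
Your proposal is correct and follows essentially the same route as the paper: evaluate $\sigma_\circledast$ from \eqref{eq:fusion-si} on the conormal covectors $\mu_R^*\gamma-\nu_L^*\gamma$, use right-centrality of $\mu_R$ together with the symmetric part $\rho_1^{\otimes2}(t)$ of $\sigma_1$ and equivariance to cancel against the fusion cross term $(\rho_2\otimes\rho_1)(t)$, and then verify centrality of $\mu_L$ and $\nu_R$ directly on $M_1\circledast M_2$. The only (harmless) difference is that you check both slots of $\sigma_\circledast$ on the conormal, where the paper contents itself with one.
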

\begin{proof}
The submanifold $M_1\times_{N'} M_2\subset M_1\times M_2$ is $\g$-invariant. To show that it is a quasi-Poisson submanifold, we need to check that 
$$\sigma_\circledast(\mu_R^*\gamma-\nu_L^*\gamma,\cdot)=0$$
for every $(x,y)\in M_1\times_{N'} M_2\subset M_1\times M_2$ and for every $\gamma\in T^*_zN'$ where $z={\mu_R(x)=\nu_L(y)}$.
Centrality gives us
$$\sigma_2(\nu_L^*\gamma,\cdot)=0$$
$$\sigma_1(\mu_R^*\gamma,\cdot)=\sigma_1(\mu_R^*\gamma,\cdot)+\sigma_1(\cdot,\mu_R^*\gamma)=(\rho_2^{\otimes2}(t))(\cdot,\mu_R^*\gamma)=((\rho_2\otimes\rho_{N'})(t))(\gamma,\cdot).$$
Since 
$$((\rho_2\otimes\rho_1)(t))(\mu_R^*\gamma-\nu_L^*\gamma,\cdot)=((\rho_2\otimes\rho_1)(t))(-\nu_L^*\gamma,\cdot)=-((\rho_2\otimes\rho_{N'})(t))(\gamma,\cdot),$$
the expression \eqref{eq:fusion-si} for $\sigma_\circledast$ gives us
$$\sigma_\circledast(\mu_R^*\gamma-\nu_L^*\gamma,\cdot)=0,$$
as we needed.

To finish the proof that we have a central pair we need to check that $\mu_L$ and $\nu_R$ are left and right central (respectively) on $M_1\circledast M_2$. For $\alpha\in\Omega^1(N')$ we get $\sigma_\circledast(\mu_L^*\alpha,\cdot)=(\sigma_1+\sigma_2+(\rho_2\otimes\rho_1)(t))(\mu_L^*\alpha,\cdot)=0$ (the $\sigma_1$-term vanishes because $\mu_L:M_2\to N$ is left-central, the $\sigma_2$ and $\rho_2\otimes\rho_1$ terms are obviously zero), hence $\mu_L:M_1\circledast M_2\to N$ is indeed left-central. The proof of right centrality of $\nu_R$ is similar.
\end{proof}

\section{Quasi-Poisson structure on moduli spaces and its centers}\label{sec:moduli}

\subsection{The Poisson structure of Atiyah--Bott and Goldman}\label{sect:abg}
Let us first recall the Poisson structure of Atiyah--Bott \cite{a-b} and Goldman \cite{gold} on moduli spaces of flat connections on a  surface. Let $\g$ be a Lie algebra with an invariant element $t\in (S^2\g)^\g$, $G$ a connected Lie group with the Lie algebra $\g$, and $\Sigma$ an oriented compact surface, possibly with a boundary.

Let $P\to\Sigma$ be a principal $G$-bundle with a flat connection $A$ and $\g_P$ the associated adjoint vector bundle. The bundle $\g_P$ inherits the flat connection $A$, i.e.\ we can see it as a local system on $\Sigma$.  We shall denote this flat vector bundle by $\g_{P,A}$. 

The element $t:\g^*\times\g^*\to\R$ gives us a pairing $\g_{P,A}^*\times\g_{P,A}^*\to\R$, which in turn gives an intersection pairing on homology
\begin{equation}\label{eq:int_pair}
\pi:H_1(\Sigma;\g_{P,A}^*)\times H_1(\Sigma;\g_{P,A}^*)\to\R.
\end{equation}

 Let 
now
$$M_\Sigma(G)=\Hom(\pi_1(\Sigma),G)/G$$
be the moduli space of flat connections on principal $G$-bundles over $\Sigma$.
 The (formal) tangent space of $M_\Sigma(G)$ at $[P,A]$ is
\begin{subequations}
\begin{equation}
T_{[P,A]}M_\Sigma(G)=H^1(\Sigma;\g_{P,A}),
\end{equation}
the cotangent space is thus
\begin{equation}\label{eq:ctg}
T_{[P,A]}^*M_\Sigma(G)=H_1(\Sigma;\g_{P,A}^*),
\end{equation}
\end{subequations}
and the intersection pairing \eqref{eq:int_pair} becomes a bivector field on $M_\Sigma(G)$. It is the Poisson structure of Atiyah--Bott and Goldman.

When $\Sigma$ is closed and $t$ is non-degenerate then the pairing \eqref{eq:int_pair} is non-degenerate by Poincar\'e duality. The Poisson structure is symplectic in this case, the symplectic form $\omega$ is equal to the corresponding intersection pairing on $ H^1(\Sigma;\g_{P,A})$, and can thus be expressed in terms of 1-forms as
$$\omega([\alpha],[\beta])=\int_\Sigma\langle\alpha\wedge\beta\rangle\quad([\alpha],[\beta]\in H^1(\Sigma;\g_{P,A})),$$
where $\langle,\rangle$ is the pairing on $\g$ coming from $t$.

\subsection{Quasi-Poisson structures on moduli spaces}
The most important examples of quasi-Poisson manifolds are  moduli spaces of flat connections on a surface with marked points on the boundary \cite{a-m-m,a-ks-m}. Here we present these quasi-Poisson structures in terms of intersection pairing, as in Section \ref{sect:abg}. This point of view is significantly simpler than that of \emph{op.\ cit.} and reduces the problem of finding left and right leaves to Poincar\'e duality. Let us remark that, unlike \emph{op.\ cit.}, we don't discuss moment maps, as we replace them with central maps and pairs (see Section \ref{sect:mom} for the relations between moment maps and central pairs).

Let  $\Sigma$ be an oriented compact surface with boundary and let $V\subset\partial\Sigma$ be a finite set. For simplicity we suppose that $V$ meets every component of $\Sigma$. 

Let
$$M_{\Sigma, V}(G)=\Hom(\Pi_1(\Sigma,V),G)$$
where $\Pi_1(\Sigma,V)$ is the fundamental groupoid of $\Sigma$ with the base set $V$. It is the moduli space of flat connections on principal $G$-bundles over $\Sigma$, trivialized over $V$.

The set $M_{\Sigma, V}(G)$ is naturally a smooth manifold (it can be identified with $G^E$, where $E$ is the edge set of a graph $\Gamma$ embedded to $\Sigma$ with vertex set $V$, such that the embedding $\Gamma\subset\Sigma$ is a homotopy equivalence), with a natural action of $G^V$, given by 
$$(g\cdot f)(\gamma)=g^{\phantom{-1}}_{\on{head}(\gamma)}f(\gamma)g^{-1}_{\on{tail}(\gamma)}\quad
(f:\Pi_1(\Sigma,V)\to G,\gamma\in\Pi_1(\Sigma,V),g\in G^V),$$
or equivalently by changing the trivializations of the principal bundles over $V$.

Similar to \eqref{eq:ctg}, the cotangent space of $M_{\Sigma,V}(G)$ is the relative homology
\begin{equation}\label{eq:qctg}
T^*_{[P,A]}M_{\Sigma,V}(G)=H_1(\Sigma,V;\g^*_{P,A}).
\end{equation}
We can define an ``intersection pairing'' on $H_1(\Sigma,V;\g^*_{P,A})$ in the following way.
Let us   split $V$ to two disjoint subsets $V=V_+\sqcup V_-$. 
Let us move every point in $V_+$ a little along $\partial\Sigma$ in the direction given by the orientation induced from $\Sigma$, and every point in $V_-$ in the opposite direction. Let us denote the set of moved points by $\hat V$. Since $V$ and $\hat V$ are disjoint, we have a well-defined intersection pairing
$$H_1(\Sigma,V;\g^*_{P,A})\times H_1(\Sigma,\hat V;\g^*_{P,A})\to\mathbb R.$$
As $\hat V$ was obtained from $V$ by an isotopy, we have a natural isomorphism
$$H_1(\Sigma,V;\g^*_{P,A})\cong H_1(\Sigma,\hat V;\g^*_{P,A}).$$
Composing it with the intersection pairing we get the pairing
\begin{multline*}
\sigma_{V_+,V_-}:H_1(\Sigma,V;\g^*_{P,A})\times H_1(\Sigma,V;\g^*_{P,A}) \to H_1(\Sigma,V;\g^*_{P,A})\times H_1(\Sigma,\hat V;\g^*_{P,A})\to\mathbb R,
\end{multline*}
which can be viewed via \eqref{eq:qctg} as a tensor field on $M_{\Sigma,V}(G)$. (Perhaps in simpler terms: the intersection pairing on $H_1(\Sigma,V;\g^*_{P,A})$ is not well defined, as it is not clear how to count the intersections at $V$. The pairing $\sigma_{V_+,V_-}$ is defined by a particular rule saying which of these intersections are counted (those that survive the isotopy) and which are not (those that disappear).)

The skew-symmetric part of the pairing $\sigma_{V_+,V_-}$
\begin{equation}\label{eq:skews}
\pi=\frac{1}{2}\bigl(\sigma_{V_+,V_-}\!-\sigma_{V_+,V_-}\!{}^{op}\bigr)
\end{equation}
 is independent of the decomposition $V=V_+\sqcup V_-$; on cycles it is given by the rule that  intersections at the points of $V$ are counted with weight $1/2$.

The following theorem was essentially proven in \cite{m-t} and \cite{lb-se}, though using a somewhat different language of ``homotopy intersection pairing''. Intersection pairing with  the local system $\g_{P,A}^*$ seems to be better suited for our needs. Let $\bar\g=\g$, with $t$ replaced by $-t$.
\begin{thm}
The bivector field $\pi$ given by \eqref{eq:skews}
 defines a $\g^{V_+}\oplus\bar\g^{V_-}$-quasi-Poisson structure on $M_{\Sigma,V}(G)$. The tensor field $\sigma_{V_+,V_-}$ is the corresponding $\sigma$-tensor.
\end{thm}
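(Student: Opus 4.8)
The plan is to verify separately the two assertions: that the symmetric companion of $\pi$ is $\sigma_{V_+,V_-}$, i.e.\ that $\sigma_{V_+,V_-}=\pi+\half\,\rho^{\otimes2}(t)$, and that $\pi$ satisfies the quasi-Poisson identity of Definition~\ref{def:qpoiss} for the Lie algebra $\g^{V_+}\oplus\bar\g^{V_-}$. Throughout, $\rho$ denotes the infinitesimal action of $\g^V$ coming from the $G^V$-action by change of trivialization, and I recall that for $\g^{V_+}\oplus\bar\g^{V_-}$ the invariant element is $t_v=+t$ at $v\in V_+$ and $t_v=-t$ at $v\in V_-$, while $\phi_v=\phi$ for every $v$ (since $\phi$ is even in $t$). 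The $\g^V$-invariance of both $\pi$ and $\sigma_{V_+,V_-}$ is immediate from naturality: the $G^V$-action acts by automorphisms of the flat bundle $\g^*_{P,A}$, and the intersection pairing is built purely from this topological data, hence is $G^V$-equivariant; differentiating gives invariance. Naturality also shows that $\pi$ and $\sigma_{V_+,V_-}$ are genuine (bi)vector fields via the identification \eqref{eq:qctg} of $T^*M_{\Sigma,V}(G)$ with relative homology.

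For the $\sigma$-tensor identity I would argue locally at the marked points. First I identify the fundamental vector field $\rho_v(u)$, for $u\in\g$ at $v\in V$: under \eqref{eq:qctg} it is the functional $[c]\mapsto\langle u,(\partial c)_v\rangle$ on relative $1$-cycles, where $(\partial c)_v\in\g^*_v$ is the part of the boundary of $c$ sitting at $v$ (an infinitesimal change of trivialization at $v$ alters the holonomies of arcs ending at $v$). By construction the difference $\sigma_{V_+,V_-}-\pi$, i.e.\ the symmetric part of $\sigma_{V_+,V_-}$, receives contributions only from the intersections created at the points of $V$ by the defining isotopy---moving $V_+$ forward and $V_-$ backward along $\partial\Sigma$. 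A half-disk computation at each $v$ shows that two relative cycles $c,c'$ ending at $v$ acquire there a single intersection of sign $+1$ for $v\in V_+$ and $-1$ for $v\in V_-$, contributing $\pm\,t\bigl((\partial c)_v,(\partial c')_v\bigr)$. Summing over $v$ identifies the symmetric part with $\half\,\rho^{\otimes2}(t)$ for the signed element $t$ of $\g^{V_+}\oplus\bar\g^{V_-}$, which together with \eqref{eq:skews} gives $\sigma_{V_+,V_-}=\pi+\half\,\rho^{\otimes2}(t)$.

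The quasi-Poisson identity $[\pi,\pi]/2=\rho^{\otimes3}(\phi)$ is the heart of the matter. I would compute the Schouten bracket through $\{f,g\}=\pi(df,dg)$, which for trace/holonomy functions is a Goldman-type sum over the transversal intersection points of the representing cycles, each point contributing the $\g$-bracket of the two holonomies weighted by $t$ and by the local intersection sign (with weight $\half$ at points of $V$). Evaluating $\{\{f,g\},h\}+\text{c.p.}$, the contributions from pairs of intersection points in the interior of $\Sigma$ cancel exactly as in Goldman's proof that the bracket \eqref{eq:int_pair} is Poisson on a surface without marked points, where this sum vanishes identically. The only surviving terms are those in which three cycle-endpoints collide at one marked point $v$; these I would organize by the cyclic order of the endpoints around $v$ after the half-weight prescription, obtaining a totally antisymmetric trilinear form built from the structure constants of $\g$ and two copies of $t$, which I expect to equal $\rho_v^{\otimes3}(\phi)$ with $\phi=-[t^{1,2},t^{2,3}]/4$. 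Summing over $v\in V$ then yields $[\pi,\pi]/2=\rho^{\otimes3}(\phi)$.

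The main obstacle is this last step: matching the marked-point triple contributions with $\phi$, including the exact coefficient $-1/4$ and the sign. This is a finite combinatorial computation in a half-disk neighborhood of each $v$, where the three relative arcs can be separated in the interior but necessarily meet at $v$; the antisymmetrization over their cyclic orderings is what reproduces the commutator $[t^{1,2},t^{2,3}]$, and the evenness of $\phi$ in $t$ accounts for the absence of any $V_+/V_-$ dependence on the right-hand side (consistent with $\pi$ being independent of the splitting). An alternative that sidesteps the global Schouten computation is a reduction to building blocks: one shows that merging two adjacent marked points on $\partial\Sigma$ transforms $\sigma$ by the internal fusion formula \eqref{eq:fus}, $\sigma_\circledast=\sigma+(\rho_2\otimes\rho_1)(t)$---again a local boundary computation---so that, by the invariance of the quasi-Poisson identity under fusion \cite{a-ks-m}, it suffices to check the axiom on a single edge, i.e.\ a disk with two marked points, where $\pi=0$ and the identity is trivial.
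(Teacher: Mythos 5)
Your primary argument has a genuine gap exactly where you yourself flag it: the claim that the triple-collision contributions at each marked point $v$ assemble into $\rho_v^{\otimes3}(\phi)$ with $\phi=-[t^{1,2},t^{2,3}]/4$ is asserted (``which I expect to equal\dots'') but not performed, and this step is the entire content of the theorem --- the interior cancellation is Goldman's, and the rest is bookkeeping. The local combinatorics at a marked point is genuinely delicate under the half-weight prescription: in $\{\{f,g\},h\}+\mathrm{c.p.}$ one gets terms where an intersection of two cycles created by the isotopy near $v$ is paired with the endpoint of a third cycle at $v$, and the weights $\tfrac12$ and the signs for $v\in V_+$ versus $v\in V_-$ do not obviously antisymmetrize into the commutator $[t^{1,2},t^{2,3}]$ without a careful case analysis over the cyclic orderings of the three arcs. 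Until that computation is written down, the quasi-Jacobi identity is not established. (Your identification of the symmetric part of $\sigma_{V_+,V_-}$ with $\tfrac12\,\rho^{\otimes2}(t)$ by a half-disk count at each $v$ is correct and is the easy half.)

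The ``alternative'' you sketch in your closing sentences is in fact the paper's proof, and it is the efficient route. The paper starts from disjoint unions of disks, each carrying one point of $V_+$ and one of $V_-$, where $\sigma_{V_+,V_-}=0$ and the moduli space is quasi-Poisson-commutative; it then shows the statement is preserved under two operations: deleting a marked point (quotienting by one copy of $G$), and the corner connected sum identifying two points of $V_+$ (or two of $V_-$), under which $\sigma$ changes exactly by the internal fusion term $(\rho_2\otimes\rho_1)(t)$ of \eqref{eq:fus}, accounted for by the new intersections created near the identified corner. Every $(\Sigma,V_+\sqcup V_-)$ is reached by these operations, so the identity follows from its invariance under fusion and reduction. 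If you pursue this route, note two points your sketch glosses over: you need the point-deletion operation as well as the corner sum to reach an arbitrary $(\Sigma,V)$, and the base case is not ``$\pi=0$ hence trivial'' --- you must also check $\rho^{\otimes3}(\phi)=0$ on the disk, which follows because the stabilizers of the $\g\oplus\bar\g$-action on $G$ are Lagrangian, hence coisotropic.
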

\begin{proof}
This proof is adapted from \cite[Theorems 2 and 3]{lb-se}.

The theorem is valid if $(\Sigma, V_+\sqcup V_-)$ is a disjoint union of disks, with one point in $V_+$ and one point in $V_-$ on each disk. In this case $\sigma_{V_+,V_-}=0$ and $M_{\Sigma,V}(G)$ is a quasi-Poisson-commutative manifold.

If the theorem is valid for a surface $(\Sigma,V=V_+\sqcup V_-)$ then it is also valid for $(\Sigma',V'=V'_+\sqcup V'_-)$, for these $(\Sigma',V')$:
\begin{enumerate}
\item $\Sigma'=\Sigma$, $V'=V-\{x\}$ for some $x\in V$. In this case $M_{\Sigma', V'}(G)=M_{\Sigma, V}(G)/G$, and the quasi-Poisson structure on $M_{\Sigma, V}(G)$ reduces to the quasi-Poisson structure on $M_{\Sigma', V'}(G)$.
\item Let $x,y\in V_+$ and let $\Sigma'$ be obtained from $\Sigma$ by a ``corner connected sum'':
$$
\begin{tikzpicture}[scale=0.5]
\fill[color=white!90!black] (0,0.5) -- (2.5,1.5) -- (0,2.5);
\fill[color=white!90!black] (6,0.5) -- (3.5,1.5) -- (6,2.5);
\draw (0,0.5) -- (2.5,1.5) -- (0,2.5);
\draw (6,0.5) -- (3.5,1.5) -- (6,2.5);
\node[below] at (2.5,1.5) {$x$};
\node[below] at (3.5,1.5) {$y$};
\draw[->] (7.5,1.5) -- (9.5,1.5);
\fill[color=white!90!black]  (11,0.5) -- (13,1.5) -- (15,0.5)-- plot[smooth, tension=.7] coordinates {(15,2.5) (13,2) (11,2.5)};
\draw (11,0.5) -- (13,1.5) -- (15,0.5);
\draw  plot[smooth, tension=.7] coordinates {(15,2.5) (13,2) (11,2.5)};
\node[below] at (13,1.5) {$z$};
\node at (-1,1.5) {$\Sigma$};
\node at (16,1.5) {$\Sigma'$};
\end{tikzpicture}
$$
Let $V'$ be the image of $V$ in $\Sigma'$, i.e.\ with $x$ and $y$ identified (denoted $z$ on the picture). In this case $M_{\Sigma', V'}(G)\cong M_{\Sigma, V}(G)$ (the isomorphism is induced by the gluing map $\Sigma\to\Sigma'$) and the quasi-Poisson structure on $M_{\Sigma', V'}(G)$ is obtained from  the quasi-Poisson structure on $M_{\Sigma, V}(G)$ by fusion of the $\g$'s acting at $x$ and $y$. Indeed, the additional term $(\rho_2\otimes\rho_1)(t)$ in \eqref{eq:fus} corresponds to the new intersections close to the identified $x$ and $y$. The same works for $x,y\in V_-$ if we reverse the order of $x$ and $y$ in the corner connected sum.
\end{enumerate}

Using these operation we can get to arbitrary $(\Sigma,V_+\sqcup V_-)$.
\end{proof}

\subsection{Centers of moduli spaces}

The set $V\subset\partial\Sigma$ splits $\partial\Sigma$ to finitely many \emph{boundary arcs}. There may be boundary circles without any marked points; we shall call them \emph{uncut circles}. We shall say that a boundary arc from $x\in V$ to $y\in V$ (the arc is oriented by the  orientation induced from $\Sigma$) is \emph{left} if $x\in V_-$ and $y\in V_+$ (i.e.\ if the corresponding $\hat x,\hat y\in \hat V$ are ``outside'' of the arc) and \emph{right} if $x\in V_+$ and $y\in V_-$ (i.e\ if $\hat x,\hat y$ are ``inside''). Notice that the number of the left arcs is equal to the number of the right arcs; there may be arcs which are neither left nor right.
\begin{figure}[h]
\includegraphics[width=5cm]{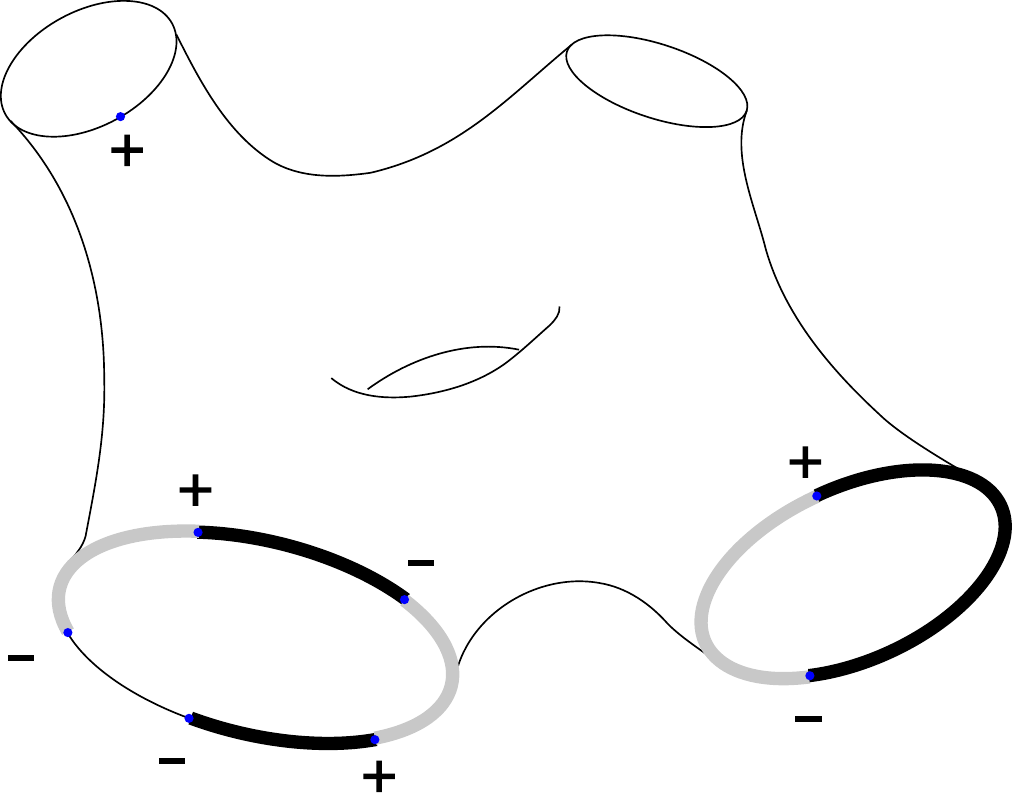}
\caption{Left boundary arcs are {\color{white!60!black}\bf{light}} and right are {\bf dark}; the remaining parts of the boundary are thin. Points in $V_+$ are marked by $+$ and in $V_-$ by $-$.}
\end{figure}

Let $L$ be the set of the left arcs. The holonomies along the left arcs give us a map
$$\mu_L:M_{\Sigma,V}(G)\to G^L,\ f\mapsto (f(a))_{a\in L}$$
and likewise, if $R$ is the set of the right arcs, we get a map
$$\mu_R:M_{\Sigma,V}(G)\to G^R,\ f\mapsto (f(a))_{a\in R}.$$
\begin{thm}
The map $\mu_L$ is left-central and the map $\mu_R$ is right-central.
\end{thm}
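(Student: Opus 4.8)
The plan is to show that $\mu_L$ is left-central by verifying $\sigma_{V_+,V_-}(df,\cdot)=0$ for $f=\mu_L^*(\text{function on }G^L)$, and symmetrically for $\mu_R$. By the characterization preceding the statement, a map is left-central iff it is $\g^{V_+}\oplus\bar\g^{V_-}$-equivariant and its fibers contain the left leaves, equivalently iff every pulled-back covector $\alpha\in T^*_{[P,A]}M_{\Sigma,V}(G)=H_1(\Sigma,V;\g^*_{P,A})$ satisfies $a_L(\alpha)=\sigma_{V_+,V_-}(\cdot,\alpha)=0$. So the whole proof should reduce to a homological intersection-theory computation. The first step is to identify, under the duality \eqref{eq:qctg}, which relative homology classes arise as $\mu_L^*\gamma$: since $\mu_L$ records holonomies along the left arcs, the conormal directions to a fiber of $\mu_L$ are represented by the \emph{left boundary arcs themselves}, viewed as relative $1$-cycles in $H_1(\Sigma,V;\g^*_{P,A})$ (each left arc $a$ runs from a point of $V_-$ to a point of $V_+$, so it is a genuine relative cycle). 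Equivariance under $G^L$ is clear from the definition of the action.

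The key step is then to compute the intersection pairing $\sigma_{V_+,V_-}([a],\cdot)$ where $[a]$ is (the class of) a left arc lying in $\partial\Sigma$. The recipe for $\sigma_{V_+,V_-}$ is to push the \emph{second} argument's marked points off $V$ to obtain $\hat V$, with $V_+$-points moved forward along $\partial\Sigma$ and $V_-$-points moved backward, and then count genuine transverse intersections of the perturbed configuration. The heart of the argument is the observation already built into the definition of ``left arc'': because $a$ lies entirely on the boundary and its endpoints $x\in V_-$, $y\in V_+$ get moved to $\hat x,\hat y$ lying \emph{outside} the arc, any other relative cycle $\beta$ can be represented (after the isotopy that produces $\hat V$) so that it meets $a$ only possibly at the endpoints $x,y$ — and those intersection points are precisely the ones that ``disappear'' under the prescribed isotopy. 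Hence $\sigma_{V_+,V_-}([a],[\beta])=0$ for all $[\beta]$, which is exactly left-centrality. For $\mu_R$ one argues symmetrically: a right arc runs from $V_+$ to $V_-$, so when it sits in the \emph{first} argument slot the isotopy of the second argument's marked points again pushes the relevant intersections off, giving $\sigma_{V_+,V_-}(\cdot,[a])=0$, i.e.\ right-centrality.

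I expect the main obstacle to be making precise the claim that the boundary arcs, when placed in the correct argument slot, have trivial $\sigma_{V_+,V_-}$-pairing with \emph{every} class — in particular handling the endpoint intersections correctly and checking that no interior intersections can be forced by the topology. This is where one must carefully invoke the isotopy-based definition of $\sigma_{V_+,V_-}$ (the rule specifying which intersections at $V$ survive) rather than the naive, ill-defined intersection pairing; the parity/direction convention distinguishing $V_+$ from $V_-$ is exactly what guarantees that a \emph{left} arc pairs trivially on the right and a \emph{right} arc pairs trivially on the left. A clean way to organize this is to reduce to the boundary: represent $\mu_L^*\gamma$ by a cycle supported on $\partial\Sigma$ and use that two relative cycles both supported on the boundary, with the prescribed endpoint-displacement rule, have vanishing $\sigma_{V_+,V_-}$-pairing precisely when the displaced endpoints lie on the ``correct'' side. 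Once this boundary computation is set up, left- and right-centrality follow immediately, and (as noted after the definition of central maps) quasi-Poissonness of $\mu_L,\mu_R$ is then automatic.
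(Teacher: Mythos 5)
Your proposal is correct and follows essentially the same route as the paper: represent $\mu_L^*\gamma$ by a relative cycle supported on the left boundary arcs and observe that, since the displaced points $\hat x,\hat y$ of a left arc's endpoints lie outside the arc, the appropriately deformed second cycle can be made disjoint from it, so $\sigma_{V_+,V_-}([c],\cdot)=0$. The paper's proof is just a terser version of this intersection-theoretic argument, with the symmetric statement for $\mu_R$ handled identically.
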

\begin{proof}
Let $[c]\in H_1(\Sigma,V;\g_{P,A}^*)$ be such that $c$ is supported on the left arcs. Then for any $[c']\in H_1(\Sigma,V;\g_{P,A}^*)$ we have $\sigma_{V_+,V_-}([c],[c'])=0$, as the supports of $c$ and of the appropriately deformed $c'$ don't intersect. This proves that $\mu_L$ is left-central. The proof for $\mu_R$ is similar.
\end{proof}

When $t\in(S^2\g)^\g$ is non-degenerate, we can describe the left and the right leaves of $M_{\Sigma,V}(G)$ explicitly.
\begin{thm}\label{thm:mod-leaves}
If $t$ is non-degenerate then
the left (right) leaves of $M_{\Sigma,V}(G)$ are obtained by fixing the holonomies along the left  (right) boundary arcs (i.e.\ the value of $\mu_{L(R)}$) and the conjugacy classes of the holonomies along the uncut boundary circles. The big leaves are obtained by fixing only the conjugacy classes along the uncut circles.
\end{thm}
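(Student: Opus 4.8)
The plan is to argue entirely at the level of the anchor maps and to reduce everything to a computation of radicals of the intersection pairing via Poincar\'e--Lefschetz duality. Recall that $T^LM=\on{im} a_L$ with $a_L(\alpha)=\sigma_{V_+,V_-}(\cdot,\alpha)$, so under the identification $T^*_{[P,A]}M_{\Sigma,V}(G)=H_1(\Sigma,V;\g^*_{P,A})$ the annihilator $(\cdot)^\circ$ of $T^LM$ inside $T^*M$ is the \emph{left radical}
$$\on{Rad}_L\sigma=\{[c]:\sigma_{V_+,V_-}([c],[c'])=0\ \forall\,[c']\}.$$
Hence it suffices to show that $\on{Rad}_L\sigma$ is the conormal space of the claimed level set, i.e.\ equals the subspace $S_L\subset H_1(\Sigma,V;\g^*_{P,A})$ spanned by (i) the relative classes of the left boundary arcs $a\in L$ with arbitrary $\g^*$-coefficients, and (ii) the image of $\bigoplus_{C\text{ uncut}}H_1(C;\g^*_{P,A}|_C)$. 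This identification of conormals is the routine part: the differential of the holonomy along $a$ is exactly the relative cycle $[a]$, so $\langle\text{left arcs}\rangle$ is the conormal of the fibres of $\mu_L$; and fixing the conjugacy class of the holonomy $h_C$ along an uncut circle $C$ cuts out infinitesimally the $\on{Ad}^*$-invariant covectors supported on $C$, i.e.\ the image of $H_1(C;\g^*_{P,A}|_C)=(\g^*)^{h_C}$. The right leaves are then handled by the symmetric argument with $a_R$.

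The inclusion $S_L\subseteq\on{Rad}_L\sigma$ is the geometric heart and is easy. A class in $S_L$ is represented by a cycle supported arbitrarily close to the left arcs and the uncut circles, and such a cycle meets nothing after the isotopy defining $\sigma_{V_+,V_-}$: for a left arc from $x\in V_-$ to $y\in V_+$ the isotopy pushes $\hat x$ backwards and $\hat y$ forwards, i.e.\ \emph{out} of the arc, so the endpoints of any isotoped second cycle $\widehat{c'}$ leave the arc and no intersection survives; an uncut circle is disjoint from $V$ altogether. This is precisely the intuition that left centers are ``holonomies along the parts of the boundary that don't intersect any cycle''.

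The hard part, and the step I expect to be the main obstacle, is the reverse inclusion $\on{Rad}_L\sigma\subseteq S_L$, i.e.\ that there are no further radical classes. Here I would invoke Poincar\'e--Lefschetz duality, which is exactly where the non-degeneracy of $t$ enters: the intersection pairing $H_1(\Sigma,V;\g^*_{P,A})\times H_1(\Sigma,\hat V;\g^*_{P,A})\to\R$ is perfect, and the isotopy isomorphism identifies $H_1(\Sigma,\hat V;\g^*_{P,A})\cong H_1(\Sigma,V;\g^*_{P,A})$. Feeding the long exact sequences of the pairs $(\Sigma,V)$ and $(\Sigma,\hat V)$ into this duality should identify $\on{Rad}_L\sigma$ with the part of the boundary that the isotopy separates from $V$ (the left arcs) together with the boundary circles carrying monodromy-invariant coefficients, and a dimension count then forces equality with $S_L$. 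The genuinely delicate point is the bookkeeping of the half-weighted intersections at the points of $V$: a \emph{cut} boundary circle passes through a marked point and so meets the radial cycles ending there, hence does \emph{not} lie in the radical, which is exactly why only uncut circles survive.

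Finally, the statement about big leaves follows formally, with no further duality, from the identity $T^{big}M=T^LM+\rho(\g)$ recorded earlier. Annihilating gives
$$(T^{big}M)^\circ=\on{Rad}_L\sigma\cap\rho(\g^V)^\circ=S_L\cap\{[c]:\partial c=0\},$$
since $\rho(\g^V)^\circ$ is precisely the space of \emph{absolute} classes (those representable by closed cycles). A combination of left arcs is closed only if the arcs chain into full boundary circles, which is impossible: a left arc ends in $V_+$, while any arc starting there cannot be left. Hence the only closed classes in $S_L$ are the uncut circles with invariant coefficients, so $(T^{big}M)^\circ$ is spanned by these, i.e.\ the big leaves are cut out by fixing only the conjugacy classes along the uncut circles, as claimed.
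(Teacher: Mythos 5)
Your overall strategy (compute the left radical of $\sigma_{V_+,V_-}$ and identify it with the conormal space of the claimed level set) coincides with the paper's, and both your easy inclusion $S_L\subseteq\on{Rad}_L\sigma$ and your derivation of the big-leaf statement from $T^{big}M=T^LM+\on{Im}\rho$ are sound. However, the step you yourself flag as the main obstacle --- the reverse inclusion $\on{Rad}_L\sigma\subseteq S_L$ --- rests on a false duality claim and is not actually carried out. The pairing $H_1(\Sigma,V;\g^*_{P,A})\times H_1(\Sigma,\hat V;\g^*_{P,A})\to\R$ is \emph{not} perfect: composed with the isotopy isomorphism it \emph{is} $\sigma_{V_+,V_-}$, whose radical is exactly what you are computing, and your own easy inclusion already produces nonzero radical elements whenever there is a left arc or an uncut circle. (Concretely, for the disk with $V=\{x,y\}$, $x\in V_+$, $y\in V_-$, this pairing is identically zero on $\g^*\times\g^*$.) Poincar\'e--Lefschetz duality gives a perfect pairing only between homologies relative to \emph{complementary} pieces of the boundary, i.e.\ $H_1(\Sigma,A)\times H_1(\Sigma,\partial\Sigma\setminus A)$, and $\hat V$ is not the complement of $V$ in $\partial\Sigma$.

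The missing idea is the factorization the paper uses: since the supports of $\partial\Sigma-\hat V$ and $\hat V$ are disjoint, the pairing $\sigma_{V_+,V_-}$ factors in its first variable through the natural map $H_1(\Sigma,V;\g^*_{P,A})\to H_1(\Sigma,\partial\Sigma-\hat V;\g^*_{P,A})$, and the induced pairing $H_1(\Sigma,\partial\Sigma-\hat V;\g^*_{P,A})\times H_1(\Sigma,\hat V;\g^*_{P,A})\to\R$ \emph{is} perfect (here the boundary pieces are complementary, and this is where the non-degeneracy of $t$ enters). Hence the left radical equals the kernel of that map, which by the long exact sequence of the triple $V\subset\partial\Sigma-\hat V\subset\Sigma$ is the image of $H_1(\partial\Sigma-\hat V,V;\g^*_{P,A})=\bigoplus_K H_1(K,K\cap V;\g^*_{P,A})$; the only components $K$ with nonzero contribution are the left arcs (where $K\cap V$ has two points) and the uncut circles, which yields exactly $S_L$ with no dimension count needed. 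Without this factorization, your plan of ``feeding the long exact sequences of the pairs into the duality'' plus a dimension count has nothing to latch onto, so as written the argument does not close.
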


\begin{proof}
By Poincar\'e duality the intersection pairing
$$ H_1(\Sigma,\partial\Sigma-\hat V;\g^*_{P,A})\times H_1(\Sigma,\hat V;\g^*_{P,A})\to\R$$
is non-degenerate. The left kernel of the pairing $\sigma_{V_+,V_-}$, i.e.\ the annihilator of $T^LM_{\Sigma,V}(G)$, is thus the kernel of the map
$$T^*M_{\Sigma,V}(G)=H_1(\Sigma,V;\g^*_{P,A})\to H_1(\Sigma,\partial\Sigma-\hat V;\g^*_{P,A}),$$
i.e.\ (using the long exact sequence for the triple $V\subset\partial\Sigma- \hat V\subset\Sigma$) the image of
\begin{equation}\label{eq:img}
H_1(\partial\Sigma-\hat V,V;\g^*_{P,A})\to H_1(\Sigma,V;\g^*_{P,A}).
\end{equation}
For any connected component $K\subset\Sigma-\hat V$ we have $H_1(K,K\cap V;\g^*_{P,A})=0$ unless $K$ is a left arc (when $K\cap V$ contains two points) or if $K$ is an uncut circle. The image of $H_1(\partial\Sigma-\hat V,V;\g^*_{P,A})=\bigoplus_K H_1(K,K\cap V;\g^*_{P,A})$ under the map \eqref{eq:img} is the annihilator of the tangent space of the submanifold given by fixing the left holonomies and the uncut conjugacy classes. The proof for right leaves is similar, and the big leaves can be found using  $T^{big}=T^L+\on{Im}\rho$.
\end{proof}

\subsection{The split-symplectic case}
Theorem \ref{thm:mod-leaves} enables us to single out the case when $M_{\Sigma,V}(G)$ is split-symplectic.
\begin{thm}\label{thm:mon-ndg}
If $t$ is non-degenerate, $V$ meets every component of $\partial\Sigma$, and if the points in $V_+$ and $V_-$ alternate along $\partial\Sigma$ (i.e.\ if every boundary arc is either left or right) then
$$
\begin{tikzcd}[column sep=small]
& M_{\Sigma,V_+\sqcup V_-}(G) \arrow{dl}[swap]{\mu_L}\arrow{dr}{\mu_R} & \\
G^L & & G^R
\end{tikzcd}
$$
is a split-symplectic central pair. The same is true if we allow uncut circles, and replace $M_{\Sigma,V_+\sqcup V_-}(G)$ with any of its big leaves (i.e.\ if we fix the conjugacy class for every uncut circle).
\end{thm}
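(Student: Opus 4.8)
The plan is to verify the three requirements in the definition of a split-symplectic central pair for the Lie algebra $\g_V:=\g^{V_+}\oplus\bar\g^{V_-}$ acting on $M_{\Sigma,V}(G)$: that $t$ is non-degenerate, that $T^{big}M=TM$, and that $\g_V$ acts on $N_L=G^L$ and $N_R=G^R$ transitively with Lagrangian stabilizers. That $(\mu_L,\mu_R)$ is already a central pair is the content of the preceding theorem, and non-degeneracy of $t$ on $\g_V$ is immediate from non-degeneracy on $\g$ (the $\bar\g$-factors carry $-t$). For $T^{big}M=TM$ I would invoke Theorem \ref{thm:mod-leaves}: the big leaves are the level sets of the conjugacy classes along the uncut circles, so in the first statement (no uncut circles) there is a single big leaf, and in the second statement we have fixed exactly these classes.

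The heart of the argument is the analysis of the $\g_V$-action on $G^L$, the case of $G^R$ being symmetric. First I would record the combinatorial consequence of alternation: since $V_+$ and $V_-$ alternate along each cut boundary circle, each such circle carries equally many points of $V_+$ and of $V_-$, the arcs alternate between left and right, and---crucially---each marked point is an endpoint of exactly one left arc, as its head if it lies in $V_+$ and as its tail if it lies in $V_-$. Consequently $|L|=|V_+|=|V_-|$, so $\dim G^L=|V_+|\dim\g=\half\dim\g_V$. Writing the $G^V$-action on a holonomy as $f(a)\mapsto g_{\on{head}(a)}f(a)g_{\on{tail}(a)}^{-1}$, the action on $G^L$ is a product over the left arcs of independent two-sided multiplication actions $G\times G\to G$, the two factors acting through the (distinct) endpoints of that arc. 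Each such two-sided action is transitive, so $\g_V$ acts transitively on $G^L$.

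To see that the stabilizers are Lagrangian, I would compute the stabilizer of a point $(h_a)_{a\in L}\in G^L$ directly: an element $(u_x)_{x\in V_+}\oplus(v_y)_{y\in V_-}$ fixes it iff $u_{\on{head}(a)}h_a=h_av_{\on{tail}(a)}$ for every left arc $a$, i.e.\ iff $v_{\on{tail}(a)}=\on{Ad}_{h_a^{-1}}u_{\on{head}(a)}$. Thus the stabilizer is the graph of this correspondence, parametrized freely by $(u_x)_{x\in V_+}$, so it is half-dimensional. It is moreover isotropic for the pairing on $\g_V$: substituting $v_{\on{tail}(a)}=\on{Ad}_{h_a^{-1}}u_{\on{head}(a)}$ and using $\on{Ad}$-invariance of $t$ turns the $V_-$-part $-\sum_a\la v_{\on{tail}(a)},v'_{\on{tail}(a)}\ra$ of the pairing into $-\sum_a\la u_{\on{head}(a)},u'_{\on{head}(a)}\ra$, which cancels the $V_+$-part $+\sum_a\la u_{\on{head}(a)},u'_{\on{head}(a)}\ra$ (each point of $V_+$ being a head exactly once). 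A half-dimensional isotropic subalgebra of a space with non-degenerate $t$ is Lagrangian, completing the split-symplectic verification in the first statement.

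For the second statement I would note that a big leaf $Y\subset M_{\Sigma,V}(G)$ is a $\g_V$-quasi-Poisson submanifold with $T^{big}Y=TY$, that the restrictions of $\mu_L,\mu_R$ remain left/right central (each is still constant on the left/right leaves, which for $Y$ are those of $M$ lying in $Y$), and that the targets $G^L,G^R$ and the $\g_V$-action on them are unchanged, since the uncut circles contribute no marked points. Hence the same transitivity and Lagrangian-stabilizer computations apply verbatim. The only genuine obstacle is the bookkeeping that turns alternation into the two facts ``distinct left arcs have disjoint endpoints'' and ``each point of $V_+$ is a head exactly once''; once these are in place, both transitivity and the isotropy cancellation are immediate.
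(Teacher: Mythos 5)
Your proof is correct and follows essentially the same route as the paper: the decisive point in both is that the stabilizer of a point of $G^L$ is isomorphic to $\g^{V_+}$, hence half-dimensional, and therefore Lagrangian. The only (harmless) difference is that you establish isotropy of the stabilizer by a direct $\on{Ad}$-invariance computation on the graph $v_{\on{tail}(a)}=\on{Ad}_{h_a^{-1}}u_{\on{head}(a)}$, whereas the paper pairs the same dimension count with the coisotropy already supplied by the general theory of left-central maps; the transitivity, alternation bookkeeping, and single-big-leaf checks you spell out are left implicit there.
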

\begin{proof}
We need to verify that the stabilizers of points in $G^L$ and $G^R$ are Lagrangian. We already know that they are coisotropic, and they are isomorphic to $\g^{V_+}$, so they must be Lagrangian for dimension reasons. 
\end{proof}

In the split-symplectic case we can somewhat simplify the formula for the quasi-Poisson structure, and also express it in terms of differential forms. Let $\mathcal L\subset\partial\Sigma$ be the union of the left arcs and let $\mathcal R\subset\partial\Sigma$ the union of the right arcs, so that $\mathcal L\cup\mathcal R=\partial\Sigma$ and $\mathcal L\cap\mathcal R=V$ (for simplicity we treat the case with no uncut circles). By Poincaré duality the intersection pairing
$$H_1(\Sigma,\mathcal L;\g^*_{P,A})\times H_1(\Sigma,\mathcal R;\g^*_{P,A})\to\mathbb R$$
is non-degenerate. If we compose it with the maps (coming from the inclusions $V\subset\mathcal L$, $V\subset\mathcal R$)
$$H_1(\Sigma,V;\g^*_{P,A})\to H_1(\Sigma,\mathcal L;\g^*_{P,A}),\ H_1(\Sigma,V;\g^*_{P,A})\to H_1(\Sigma,\mathcal R;\g^*_{P,A})$$
we get the pairing $\sigma$ on $T^*M=H_1(\Sigma,V;\g^*_{P,A})$ where $M=M_{\Sigma,V}(G)$.

In terms of cohomology, we have
$$T^LM=H^1(\Sigma,\mathcal L;\g_{P,A}),\ T^RM=H^1(\Sigma,\mathcal R;\g_{P,A})$$
included in
$$TM=H^1(\Sigma,V;\g_{P,A}).$$
The intersection pairing (non-degenerate by Poincaré duality)
$$H^1(\Sigma,\mathcal L;\g_{P,A})\times H^1(\Sigma,\mathcal R;\g_{P,A})\to\mathbb R$$
is then the pairing
$$\sigma^{-1}:T^LM\times T^RM\to\mathbb R$$
inverse to $\sigma$. We can express it in terms of differential forms: if we represent a cohomology class from $H^1(\Sigma,\mathcal L;\g_{P,A})$ by a 1-form  $\alpha\in\Omega^1(\Sigma,\g_{P,A})$, $\alpha|_{\mathcal L}=0$, and similarly a class from $H^1(\Sigma,\mathcal R;\g_{P,A})$ by $\beta\in\Omega^1(\Sigma,\g_{P,A})$,  $\beta|_{\mathcal R}=0$, then
$$\sigma^{-1}([\alpha],[\beta])=\int_\Sigma\langle\alpha\wedge\beta\rangle.$$

\subsection{Examples of reduced spaces}
Let us now discuss some simple examples of reductions of $M_{\Sigma,V}(G)$'s.
If $x\in V$ then $M_{\Sigma,V-\{x\}}(G)=M_{\Sigma,V}(G)/G$, where $G$ acts at $x$; the quasi-Poisson structure from $M_{\Sigma,V}(G)$ descends to the quasi-Poisson structure on $M_{\Sigma,V-\{x\}}(G)$. This is a reduction in the sense of
Proposition \ref{prop:pred} (or of Proposition \ref{prop:red} if $V=\{x\}$).

As a slightly more complicated example, let $x\in V_+$ and $y\in V_-$. Let us consider the Lie algebra $\g\oplus\bar\g$ acting at $x$ and $y$ and its diagonal subalgebra $\g_{diag}\subset\g\oplus\bar\g$, which is coisotropic. In this case $M_{\Sigma,V}(G)/G_{diag}=M_{\Sigma',V-\{x,y\}}(G)$, where $\Sigma'$ is obtained by joining $x$ and $y$:
$$
\begin{tikzpicture}[scale=0.5]
\fill[color=white!90!black] (0,0.5) -- (2.5,1.5) -- (0,2.5);
\fill[color=white!90!black] (6,0.5) -- (3.5,1.5) -- (6,2.5);
\draw (0,0.5) -- (2.5,1.5) -- (0,2.5);
\draw (6,0.5) -- (3.5,1.5) -- (6,2.5);
\node[below] at (2.5,1.5) {$x$};
\node[below] at (3.5,1.5) {$y$};
\draw[->] (7.5,1.5) -- (9.5,1.5);
\fill[color=white!90!black] plot[smooth, tension=.7] coordinates {(11,0.5) (13,1.3) (15,0.5)} --(15,2.5) plot[smooth, tension=.7] coordinates {(15,2.5) (13,1.7) (11,2.5)}--(11,0.5);
\draw plot[smooth, tension=.7] coordinates {(11,0.5) (13,1.3) (15,0.5)};
\draw  plot[smooth, tension=.7] coordinates {(15,2.5) (13,1.7) (11,2.5)};
\node at (-1,1.5) {$\Sigma$};
\node at (16,1.5) {$\Sigma'$};
\end{tikzpicture}
$$

As the simplest example of central reduction, let us suppose that $\Sigma$ has a single boundary circle and that $V_+=\{x\}$, $V_-=\{y\}$. The circle $\partial\Sigma$ is cut by $x$ and $y$ to a left and a right arc. We thus have a central pair
$$
\begin{tikzcd}[column sep=small]
& M_{\Sigma,\{x,y\}}(G) \arrow{dl}[swap]{\mu_L}\arrow{dr}{\mu_R} & \\
G & & G
\end{tikzcd}.
$$
Reduction by $\g_{diag}\subset\g\oplus\bar\g$ will replace $\Sigma$ by $\Sigma'$ with two boundary circles. Choice of a $\g_{diag}$ orbit in each $G$ in the central pair corresponds to a choice of a conjugacy class for each of the two circles.

As the final example, let $\h,\h^*,\mf l\subset\g$ be Lagrangian Lie subalgebras. Let us suppose that $\h\cap\h^*=0$, so that $\h,\h^*\subset\g$ is  a Manin triple, and thus $\h$ a Lie bialgebra. By a theorem of Drinfeld \cite{dr-hom}, $\mf l$ defines a Poisson structure on $H/H\cap L$ which makes it to a Poisson $H$-space (for simplicity we shall work with local groups so that we don't have to spell out closedness conditions), and in this way we get a classification of Poisson homogeneous $H$-spaces.

To obtain this Poisson homogeneous space by reduction, let $(\Sigma,V)$ be a triangle and let us reduce its moduli space by $\mf c:=\mf l\oplus\h^*\oplus\h\subset\g\oplus\g\oplus\bar\g$, as on the figure:
$$
\begin{tikzpicture}[baseline=-1cm]
\coordinate[label=left:{$+$}] (A) at (0,0) ;
\coordinate[label=right:{$+$}] (B) at (2,0) ;
\coordinate[label=above:{$-$}] (C) at (1,1.7);
\filldraw[fill=white!90!black] (A)--(B)--(C)--cycle;

\draw[->] (3,1)--(4,1);

\coordinate[label=left:{$\mf l$}] (AA) at (5,0) ;
\coordinate[label=right:{$\h^*$}] (BB) at (7,0) ;
\coordinate[label=above:{$\mf h$}] (CC) at (6,1.7);
\filldraw[fill=white!90!black] (AA)--(BB)--(CC)--cycle;
\end{tikzpicture}
$$
If we constrain the holonomy along the left edge (which defines a right-central map to $G$) to be in the $\mf c$-orbit passing through 1, we get the Poisson homogeneous space $H/H\cap L$.

There are many other examples connected with the world of Poisson-Lie groups which can be obtained by reduction of moduli spaces. Some of them were studied in \cite{lb-se,lb-se-big} using moment map reduction. As we shall see in the next section, such a reduction is a special case of central reduction.

\section{Moment maps via centers; fusion of $D/H$-moment maps}\label{sect:mom}
In this section we shall see that central maps and central reduction contains, as a special case, the theory of (quasi-)Hamiltonian spaces and their reduction. In particular, if $X\subset M$ is a left leaf of a split-symplectic $\mf d$-manifold and if $\h\subset\mf d$ is the stabilizer of $X$ then $X$ carries a natural $(\h,\mf d)$-quasi-symplectic structure and the map from $X$ to the (local) space of the right leaves of $M$ is  a (local) moment map.

This point of view also allows us to define the fusion of $D/H$-valued moment maps, which was so far missing.

Throughout this section $\mf d$ denotes a Lie algebra with an invariant non-degenerate symmetric pairing.

Let $\mf h\subset\mf d$ be a Manin pair, i.e.\ $\mf d$ is a Lie algebra with an invariant non-degenerate symmetric pairing, and $\h$ is its Lagrangian subalgebra. Let $\h^*\subset\mf d$ be a Lagrangian complement of $\mf h$. Equivalently, $\h$ is a quasi-Lie bialgebra, with $\delta_\h:\h\to\h\wedge\h$ and $\phi_\h\in\bigwedge^3\h$ given by the $\h^*$ and $\h$ components of  
$$\h^*\otimes \h^*\subset\mf d\otimes\mf d\xrightarrow{[,]}\mf d\cong\h\oplus\h^*.$$

\begin{defn}[\cite{a-ks}]\label{def:qPgen}
An \emph{$(\h,\mf d;\h^*)$-quasi-Poisson manifold} is an $\h$-manifold $M$ with a bivector field $\pi$ such that
\begin{align*}
 [\pi,\pi]/2&=\rho^{\otimes3}_M(\phi_\h) \\
[\rho_M(u),\pi]&=-\rho^{\otimes2}_M(\delta_\h(u))\quad(\forall u\in\h).
\end{align*}
\end{defn}
If $\h^*\subset\dd$ is a Lie subalgebra (i.e.\ if $\mf h,\mf h^*\subset\mf d$ is a Manin triple and $\h$ a Lie bialgebra) then $\phi_\h=0$ and $\pi$ is thus a Poisson structure.

For any $(\h,\mf d;\h^*)$-quasi-Poisson manifold $M$ the distribution given by the image of
$$a:\h\oplus T^*M\to TM,\ (u,\alpha)\mapsto \rho(u)+\pi(\alpha,\cdot)$$
is integrable, as $a$ is the anchor map of a Lie algebroid structure (see \cite{b-c-s}). Its integral leaves are the minimal $(\h,\mf d;\h^*)$-quasi-Poisson submanifolds of $M$, and are called the \emph{quasi-symplectic leaves} of $M$.  $M$ is \emph{quasi-symplectic} if it contains just one quasi-symplectic leaf.

Notice that Definition \ref{def:qpoiss} is a special case of Definition \ref{def:qPgen}.
If $\g$ is a Lie algebra with an invariant element $t\in S^2\g$, and if we suppose for simplicity that $t$ is non-degenerate, then $\g_\text{diag}\subset\g\oplus\bar\g$ is a Manin pair. A $\g$-quasi-Poisson structure is the same as a $(\g_\text{diag},\g\oplus\bar\g;\g_\text{antidiag})$-quasi-Poisson structure.

Definition \ref{def:qPgen} needs to be complemented by an explanation of what happens if we change the complement $\h^*\subset\mf d$, as $\h^*$ is understood as auxiliary data.  Lagrangian complements are in 1-1 correspondence with elements $\tau\in\bigwedge^2\h$ via
$$\tau\mapsto\h^*_\tau:=\{(\tau(\cdot,\alpha),\alpha);\alpha\in\h^*\}\subset\h\oplus\h^*\cong\mf d$$
(so that $\h^*_0=\h^*$). If we replace $\h^*$ by $\h^*_\tau$ then $\pi$ has to be replaced by $\pi-\rho^{\otimes2}_M(\tau)$. The element $\tau$ is called a \emph{twist}. See \cite{a-ks} for details.

Let us observe that any $\mf d$-quasi-Poisson manifold $M$ carries also a $(\h,\mf d;\h^*)$-quasi-Poisson structure. Indeed, let $e_i$ be a basis of $\h$ and $e^i$ the dual basis of the complement $\h^*\subset\mf d$. The twist
$$\tau_{\h,\h^*}=\frac{1}{2}\sum_i e_i\wedge e^i\in\textstyle\bigwedge^2\mf d$$
corresponds to the new complement $\h\oplus\h^*\subset\mf d\oplus\bar{\mf d}$ of $\mf d_\text{diag}\subset\mf d\oplus\bar{\mf d}$. With this new complement $\h$ is a  quasi-Lie sub-bialgebra\footnote{
If $\mf a$ is a quasi-Lie bialgebra then a Lie subalgebra $\mf b\subset\mf a$ is a quasi-Lie sub-bialgebra  if $\delta_{\mf a}$ restricted to $\mf b$ has values in $\bigwedge^2\mf b$ and  $\phi_{\mf a}\in\bigwedge^3\mf b\subset\bigwedge^3\mf a$. This makes $\mf b$ with $\delta_{\mf b}:=\delta_{\mf a}|_{\mf b}$ and $\phi_{\mf b}:=\phi_{\mf a}$ to a quasi-Lie bialgebra.
} of $\mf d$, hence $M$ with the action of $\h$ and with
\begin{equation}\label{eq:pi'}
\pi'=\pi-\rho^{\otimes 2}(\tau_{\h,\h^*})
\end{equation}
is $(\h,\mf d;\h^*)$-quasi-Poisson.

\begin{thm}\label{thm:qp-qp}
Let $(M,\rho,\pi)$ be a split-symplectic $\mf d$-manifold and let $X\subset M$ be a left leaf. Let $\h\subset\mf d$ be the stabilizer of $X$, and let $\h^*\subset\mf d$ be a Lagrangian complement of $\h$. Then the bivector field \eqref{eq:pi'}
is tangent to $X$ and $(X,\rho|_\h,\pi'|_X)$ is a $(\h,\mf d;\h^*)$-quasi-symplectic manifold.
\end{thm}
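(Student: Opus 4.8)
The plan is to reduce both assertions to a single linear-algebraic identity, at a point $x\in X$, relating the anchor $a_L$, the action $\rho$, and the twisted bivector $\pi'$; the two structural identities of Definition~\ref{def:qPgen} will then come for free by restriction from $M$. First I would rewrite $\pi'$ explicitly. Fix a basis $e_i$ of $\h$ and let $e^i$ be the dual basis of the Lagrangian complement $\h^*$, so that $\langle e_i,e^j\rangle=\delta_i^j$ and $\langle e_i,e_j\rangle=\langle e^i,e^j\rangle=0$. Then $t=\sum_i(e_i\otimes e^i+e^i\otimes e_i)$ and $\tau_{\h,\h^*}=\frac12\sum_i e_i\wedge e^i$, and a direct computation turns \eqref{eq:pi'} into
$$\pi'=\sigma-\sum_i\rho(e_i)\otimes\rho(e^i).$$
In particular $\pi'$ is skew-symmetric, its symmetric part $\frac12\rho^{\otimes2}(t)$ being cancelled.

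From $\sigma=\pi+\frac12\rho^{\otimes2}(t)$ and the skew-symmetry of $\pi$ one gets, for every $\gamma\in T^*_xM$, the identity $a_L(\gamma)+a_R(\gamma)=\rho(t^\sharp\rho^*\gamma)$, where $\rho^*\colon T^*_xM\to\mf d^*$ is the transpose of $\rho$. I would then decompose $t^\sharp\rho^*\gamma=h_\gamma+h^*_\gamma$ along $\mf d=\h\oplus\h^*$; the dual-basis relations give $h_\gamma=\sum_i\langle\gamma,\rho(e^i)\rangle\,e_i\in\h$ and $h^*_\gamma=\sum_i\langle\gamma,\rho(e_i)\rangle\,e^i\in\h^*$. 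Since the explicit form of $\pi'$ yields $\pi'(\cdot,\gamma)=-a_R(\gamma)+\rho(h^*_\gamma)$, and $a_L(\gamma)=\rho(h_\gamma)+\rho(h^*_\gamma)-a_R(\gamma)$, these combine into the master identity
$$a_L(\gamma)=\rho(h_\gamma)+\pi'(\cdot,\gamma),\qquad h_\gamma\in\h,$$
valid for all $\gamma$.

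Both geometric claims follow from this identity. As $X$ is a left leaf, $T_xX=T^L_xM=\on{Im}a_L$, and as $\h$ is the stabilizer of $X$ we have $\rho(\h)\subseteq T^L_xM$. Hence $\pi'(\cdot,\gamma)=a_L(\gamma)-\rho(h_\gamma)\in T^L_xM$ for every $\gamma$, so the image of $\pi'$ lies in $T_xX$ and $\pi'$ is tangent to $X$; this makes $\pi'|_X$ a genuine bivector field on $X$. The same identity reads $a_L(\gamma)\in\rho(\h)+\on{Im}\pi'$, whence $T^L_xM\subseteq\rho(\h)+\on{Im}\pi'$; combined with $\rho(\h)\subseteq T^L_xM$ and $\on{Im}\pi'\subseteq T^L_xM$ this gives $\rho(\h)+\on{Im}\pi'=T^L_xM=T_xX$. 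Since the left-hand side is exactly the anchor image whose integral leaf is the quasi-symplectic leaf of the $(\h,\mf d;\h^*)$-structure, $X$ is a single quasi-symplectic leaf.

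Finally, the two identities of Definition~\ref{def:qPgen} hold on $X$ by restriction. By the construction \eqref{eq:pi'} preceding the theorem, $(M,\rho|_\h,\pi')$ is already $(\h,\mf d;\h^*)$-quasi-Poisson, so $[\pi',\pi']/2=\rho^{\otimes3}(\phi_\h)$ and $[\rho(u),\pi']=-\rho^{\otimes2}(\delta_\h(u))$ hold on $M$. All multivector fields occurring here are tangent to $X$: the fields $\pi'$ and $\rho(u)$ with $u\in\h$ by the above, and the right-hand sides because $\phi_\h\in\bigwedge^3\h$ and $\delta_\h(u)\in\bigwedge^2\h$ build them out of vectors in $\rho(\h)\subseteq T_xX$. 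Schouten brackets and Lie derivatives of multivector fields tangent to a submanifold restrict to the corresponding operations computed inside it, so both identities descend to $(X,\rho|_\h,\pi'|_X)$, which is therefore $(\h,\mf d;\h^*)$-quasi-symplectic. The one real computation, and the main obstacle, is the master identity: writing $\pi'$ explicitly, invoking $a_L+a_R=\rho\circ t^\sharp\circ\rho^*$, and arranging the $\h\oplus\h^*$-decomposition so that the $\h^*$-parts cancel; the rest is formal, the structural identities in particular requiring no recomputation.
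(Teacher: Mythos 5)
Your proof is correct and follows essentially the same route as the paper: the key step in both is the identity $\pi'=\sigma-\sum_i\rho(e_i)\otimes\rho(e^i)$, which gives $\pi'(\cdot,\gamma)=\sigma(\cdot,\gamma)-\rho(h_\gamma)\in T^L_xM+\rho_x(\h)=T_xX$ and simultaneously shows that the anchor image coincides with $\on{Im}a_L=T_xX$. Your detour through $a_L+a_R=\rho\circ t^\sharp\circ\rho^*$ and the explicit restriction argument for the structural identities are just more detailed write-ups of steps the paper states directly or leaves implicit.
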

\begin{proof}
As $(M,\rho|_\h,\pi')$ is $(\h,\mf d;\h^*)$-quasi-Poisson, it remains to show that $\pi'$ is tangent to $X$ and that the image of
$$a:\h\oplus T^*X\to TX,\ (u,\alpha)\mapsto \rho(u)+\pi'(\alpha,\cdot)$$
is $TX$. As
$$t/2+\tau_{\h,\h^*}=\sum_i e_i\otimes e^i,$$
we get
$$\pi'=\sigma-\sum_i\rho(e_i)\otimes\rho(e^i),$$
which implies, for every $x\in X$ and $\alpha\in T^*_xM$,
$$\pi'(\cdot,\alpha)=\sigma(\cdot,\alpha)-\sum_i\alpha(\rho(e^i))\rho(e_i)\in T_x^R M+\rho_x(\h)=T_x^R M=T_xX.$$
This also shows that the image of $a$ is equal to the image of $\alpha\mapsto\sigma(\cdot,\alpha)$, i.e.\ to $T_xX$.
\end{proof}
 Let us now recall the definition of moment maps.
\begin{defn}[\cite{a-ks,lb-se-big}]
If $(M,\rho,\pi)$ is a $(\h,\mf d;\h^*)$-quasi-Poisson manifold and if
 $N$ is a $\mf d$-manifold with coisotropic stabilizers then a map $\mu:M\to N$ is a \emph{$N$-valued moment map} if it is $\h$-equivariant and if
$$(1\otimes\mu_*)(\pi)=-(\rho_M\otimes 1)(Z_N),$$ 
 where $Z_N\in\Gamma(\h\otimes TN)$ is given by
$$\langle\alpha, Z_N\rangle=\rho_N(\alpha)\quad\forall \alpha\in\h^*\subset\mf d$$
and $\rho_N$ is the action of $\mf d$ on $N$.
\end{defn}

The most important cases are $N=D/H$, where $H\subset D$ is the connected group integrating $\h$ (provided $H\subset D$ is closed, or working with local groups), and more generally $N=D/H'$, where $\h'\subset\mf d$ is another Lagrangian Lie subalgebra. When $\delta_\h=0$ and $\phi_\h=0$, i.e.\ when  $M$ has a $\h$-invariant Poisson structure, we have $D\cong H\ltimes\h^*$ and $D/H\cong\h^*$; in this case a $D/H$-valued moment map is simply a moment map in the usual sense.

\begin{rem}
Suppose that $t\in(S^2\g)^\g$ is nondegenerate, so that $D=G\times G$. We can identify $D/G$ (where $G\subset D$ is the diagonal) with $G$. If $M$ is a $\g$-quasi-Poisson manifold, a moment map $\mu:M\to D/G=G$ is called in \cite{a-m-m,a-ks-m} a group-valued moment map.

  In this case left (and right) leaves of $M$ coincide with the big leaves \cite[Theorem 3]{qhg}. 
In particular, if $M$ is quasi-symplectic then $\sigma$ is non-degenerate. One can show that
\begin{equation}\label{eq:qham}
\sigma^{-1}=\omega+\frac{1}{2}\mu^*s\in\Gamma((T^*)^{\otimes2}M)
\end{equation}
where $\omega\in\Omega^2(M)$ is the skew-symmetric part of $\sigma^{-1}$ and $s$ is the bi-invariant (pseudo-)Riemann metric on $G$ given by $t$. The 2-form $\omega$, together with the action of $\g$ and the map $\mu$, makes $M$ to a quasi-Hamiltonian space in the sense of \cite{a-m-m}. The relation between $\pi$ and $\omega$ given by \eqref{eq:qham} is somewhat cleaner than the (equivalent) relation given in \cite{a-ks-m}, as it clarifies the role of non-degeneracy conditions, which are simply the  invertibility of both sides of Equation \eqref{eq:qham}.

\end{rem}

Theorem \ref{thm:qp-qp} can now be complemented as follows.

\begin{thm}\label{thm:qp-qp-mm}
If, in the context of Theorem \ref{thm:qp-qp}, $\mu_R:M\to N$ is a right-central map then $\mu_R|_X:X\to N$ is a moment map.
\end{thm}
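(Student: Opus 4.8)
The plan is to check the two defining conditions of an $N$-valued moment map for $\mu_R|_X$: $\h$-equivariance and the bivector identity. Equivariance is immediate, since a central map is in particular $\mf d$-equivariant, so its restriction to $X$ is $\h$-equivariant for the action $\rho|_\h$ of the stabilizer $\h\subset\mf d$; moreover $N$, being the target of a right-central map, is $\mf d$-quasi-Poisson-commutative and thus has coisotropic stabilizers, exactly as the notion of moment map demands.

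For the bivector identity I would start from the formula derived inside the proof of Theorem \ref{thm:qp-qp},
$$\pi'=\sigma-\sum_i\rho(e_i)\otimes\rho(e^i),$$
where $e_i$ is a basis of $\h$ and $e^i$ the dual basis of the complement $\h^*$. Applying $1\otimes(\mu_R)_*$ and using $\mf d$-equivariance of $\mu_R$, which gives $(\mu_R)_*\rho(e^i)=\rho_N(e^i)$, yields
$$(1\otimes(\mu_R)_*)(\pi')=(1\otimes(\mu_R)_*)(\sigma)-\sum_i\rho(e_i)\otimes\rho_N(e^i).$$

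The crucial step is that the $\sigma$-term vanishes. Pairing the second leg of $(1\otimes(\mu_R)_*)(\sigma)$ with an arbitrary $\gamma\in\Omega^1(N)$ gives $\sigma(\cdot,\mu_R^*\gamma)$; writing $\gamma$ locally as $\sum_j f_j\,dg_j$ and using right-centrality of each $\mu_R^*g_j$, i.e.\ $\sigma(\cdot,d(\mu_R^*g_j))=0$, shows this is zero for every $\gamma$, so $(1\otimes(\mu_R)_*)(\sigma)=0$. It then remains to recognize the surviving term as the right-hand side of the moment map condition: the defining relation $\langle\alpha,Z_N\rangle=\rho_N(\alpha)$ for $\alpha\in\h^*$, together with the duality of the bases $e_i,e^i$, forces $Z_N=\sum_i e_i\otimes\rho_N(e^i)$, so that $-(\rho|_\h\otimes1)(Z_N)=-\sum_i\rho(e_i)\otimes\rho_N(e^i)$. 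Because each $e_i$ lies in $\h$ and $\h$ stabilizes $X$, the first legs already lie in $TX$, and the computed equality is precisely $(1\otimes(\mu_R|_X)_*)(\pi'|_X)=-(\rho|_\h\otimes1)(Z_N)$.

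The part demanding the most care is purely a matter of conventions: one must keep the left/right slot assignments consistent among $\sigma$, the definition of right-centrality, and the definition of $Z_N$, and verify that $Z_N=\sum_i e_i\otimes\rho_N(e^i)$ uses the correct pairing identifying $e^i$ as dual to $e_i$. Once this bookkeeping is fixed the argument is short, since the genuinely substantive facts---that $\pi'$ is tangent to $X$ and that $\pi'=\sigma-\sum_i\rho(e_i)\otimes\rho(e^i)$---are already established in Theorem \ref{thm:qp-qp}.
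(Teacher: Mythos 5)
Your proof is correct and follows essentially the same route as the paper: both start from $\pi'=\sigma-\sum_i\rho(e_i)\otimes\rho(e^i)$, kill the $\sigma$-term by right-centrality of $\mu_R$, and identify the remaining term with $-(\rho_M\otimes 1)(Z_N)$. The extra details you supply (the covector pairing argument for $(1\otimes(\mu_R)_*)(\sigma)=0$ and the explicit formula $Z_N=\sum_i e_i\otimes\rho_N(e^i)$) are accurate elaborations of steps the paper leaves implicit.
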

\begin{proof}
Right-centrality of $\mu_R$ means $(1\otimes(\mu_R)_*)(\sigma)=0$, hence
\begin{multline*}
\bigl(1\otimes(\mu_R)_*\bigr)(\pi')=-\bigl(1\otimes(\mu_R)_*\bigr)\Bigl(\sum_i\rho_M(e_i)\otimes\rho_M(e^i)\Bigr)=\\
=-\sum_i\rho_M(e_i)\otimes\rho_N(e^i)=-(1\otimes\rho_M)(Z_N).
\end{multline*}
This shows that $\mu_R:M\to N$ is a moment map for the $(\h,\mf d;\h^*)$-quasi-Poisson structure $\pi'=\pi-\rho^{\otimes2}(\tau_{\h,\h^*})$ on $M$, and thus $\mu_R|_X$ is a moment map, too.
\end{proof}

\begin{example}
Let us return to moduli spaces of flat connections. In the case when $V=V_+$ the map $M_{\Sigma,V_+\sqcup \emptyset}(G)\to G^n$ given by the holonomies along the boundary arcs (where $n=|V_+|$ is the number of boundary arcs) is a moment map. This observation made in \cite{a-m-m,a-ks-m} was the reason why the authors of \emph{op.\ cit.} started developing the theory of group-valued moment maps and quasi-Poisson geometry. Let us explain these moment maps using central pairs. For simplicity we suppose (as in \emph{op.\ cit.}) that $t$ is non-degenerate and that $V_+$ intersects every component of $\partial\Sigma$ (which implies that $M_{\Sigma,V_+\sqcup \emptyset}(G)$ is quasi-symplectic).

Let us choose a point on each of the $n$ boundary arcs and let $V_-$ be the set of these new points. Then  $M=M_{\Sigma,V_+\sqcup V_-}(G)$ is as in Theorem \ref{thm:mon-ndg}, i.e.
$$
\begin{tikzcd}[column sep=small]
& M \arrow{dl}[swap]{\mu_L}\arrow{dr}{\mu_R} & \\
G^L & & G^R
\end{tikzcd}
$$
is a split-symplectic central pair. This implies that $X:=\mu_L^{-1}(1)\subset M$ (as any fibre of $\mu_L$) is a left leaf. Since $X\subset M$ is given by the condition that the holonomies along the left boundary arcs are equal to $1$, we can identify $X$ (by contracting the left arcs) with $M_{\Sigma,V_+\sqcup \emptyset}(G)$.  The quasi-Poisson structure on $X$ given by Theorem \ref{thm:qp-qp} is equal to the original quasi-Poisson structure on $M_{\Sigma,V_+\sqcup \emptyset}(G)$.

Theorem \ref{thm:qp-qp-mm} now says that $\mu_R|_X:X\to G^R=G^n$ is a moment map, and it  is the original moment map $M_{\Sigma,V_+\sqcup \emptyset}(G)\to G^n$.
\end{example}

To make the link between centers and moment maps complete, we need to be more specific about the category we work in, namely choose one of these posibilities:
\begin{enumerate}
\item If $D$ denotes a connected group integrating $\mf d$ and $H\subset D$ the connected subgroup integrating $\h$, we need to suppose that $H$ is closed in $D$. Moreover we should consider  only $\mf d$- and $\h$-actions which integrate to $D$- and $H$-actions.
\item Without imposing any restrictions, we can work with local groups $D$ and $H$. In this case $D/H$ denotes any manifold with a transitive $\mf d$-action and with a chosen point $[1]\in D/H$ whose stabilizer is $\h\subset\mf d$. We then have to understand the results in the appropriate local form.
\end{enumerate}
 To stress that we now replace $\h$- and $\mf d$-actions with $H$- and $D$-actions, we shall use terminology ``$D$-quasi-Poisson manifolds'', ``$(H,\mf d;\h^*)$-quasi-Poisson manifolds'', etc.

\begin{thm}\label{thm:corr}
There is 1-1 correspondence between $D$-central pairs
\begin{equation}\label{eq:cp}
\begin{tikzcd}[column sep=small]
& M \arrow{dl}[swap]{\mu_L}\arrow{dr}{\mu_R} & \\
D/H & &N
\end{tikzcd}
\end{equation}
and $(H,\mf d;\h^*)$-quasi-Poisson manifolds $X$ with a moment map
\begin{equation}\label{eq:mm}
\mu:X\to N.
\end{equation}

The correspondence is: If the central pair \eqref{eq:cp} is given then $X=\mu_L^{-1}([1])$ and $\mu=\mu_R|_X$. If $X$ and $\mu$ are given, then $M$ is obtained from $X$ by inducing the $H$-action to a $D$-action, i.e.
\begin{equation}\label{eq:ind}
M=(D\times X)/H
\end{equation}
where $H$ acts on $D\times X$ via $h\cdot(d,x)=(dh^{-1},h\cdot x)$; the action of $D$ on $M$ is $d'\cdot[(d,x)]=[(d'd,x)]$. The central maps are
$$\mu_L([d,x])=[d]\in D/H,\ \mu_R([d,x])=d\cdot\mu(x)\in N.$$
The link between the bivector field $\pi$ on $M$ and $\pi'$ on $X=\mu_L^{-1}([1])$ is given by \eqref{eq:pi'}.
\end{thm}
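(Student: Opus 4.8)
The plan is to establish the bijection by constructing the two directions explicitly and checking they are mutually inverse, with the bulk of the work being the verification that each construction lands in the claimed class of objects. I would organize the proof around the associated-bundle construction $M=(D\times X)/H$, since this is the less obvious direction and the source of the main difficulties.

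\emph{From a central pair to a quasi-Poisson manifold with moment map.} Given the central pair \eqref{eq:cp}, I would first note that because $\mu_L$ is left-central and $D/H$ is $\mf d$-quasi-Poisson-commutative with transitive $D$-action, the fibers of $\mu_L$ are exactly the left leaves of $M$ (this is the split-symplectic picture, though here we only need that $X=\mu_L^{-1}([1])$ is a left leaf with stabilizer $\h$). Theorem \ref{thm:qp-qp} then endows $X$ with its $(\h,\mf d;\h^*)$-quasi-Poisson structure via \eqref{eq:pi'}, and Theorem \ref{thm:qp-qp-mm} shows that $\mu:=\mu_R|_X:X\to N$ is a moment map. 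So this direction is essentially assembled from the two preceding theorems, and I would present it mostly by citation.

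\emph{From a moment map to a central pair.} This is where the real content lies. Starting from $(X,\pi',\mu)$, I would form $M=(D\times X)/H$ with the stated $D$-action and the maps $\mu_L([d,x])=[d]$, $\mu_R([d,x])=d\cdot\mu(x)$. The key step is to produce the bivector field $\pi$ on $M$ and verify it is $D$-quasi-Poisson. Since $M$ carries a free, transitive-on-the-$D$-factor structure, every tangent vector at $[1,x]$ decomposes using $\rho(\mf d)$ and $T_xX$, and I would define $\pi$ by the requirement that it recover $\pi'$ along $X=\mu_L^{-1}([1])$ via the inverse of the twist relation \eqref{eq:pi'}, namely $\sigma=\pi'+\sum_i\rho(e_i)\otimes\rho(e^i)$, then extend $D$-equivariantly. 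The hard part will be checking that this $\pi$ is globally well-defined (independent of the representative $[d,x]$, i.e.\ $H$-invariance of the local formula) and that it satisfies the $\mf d$-quasi-Poisson condition $[\pi,\pi]/2=\rho^{\otimes3}(\phi)$; the moment map identity and the quasi-Lie-bialgebra compatibility $[\rho(u),\pi']=-\rho^{\otimes2}(\delta_\h(u))$ are precisely what make the induced bivector close up, and unwinding this is the crux of the argument. Once $\pi$ is in hand, left-centrality of $\mu_L$ follows because $\mu_L$ collapses the $X$-directions and $D/H$ is quasi-Poisson-commutative, while right-centrality of $\mu_R$ translates back, via the same twist computation used in Theorem \ref{thm:qp-qp-mm}, into the moment-map equation for $\mu$.

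\emph{Mutual inverseness.} Finally I would verify the two constructions compose to the identity. Going central-pair $\to X \to$ central-pair reproduces $M$ because $M$ is $D$-equivariantly identified with $(D\times\mu_L^{-1}([1]))/H$ through $[d,x]\mapsto d\cdot x$ (using that $\mu_L$ is a $D/H$-valued map with the correct stabilizers, so $M\to D/H$ is a fiber bundle with fiber the left leaf), and the bivector fields match by construction of $\pi$ from $\pi'$ via \eqref{eq:pi'}. Conversely, starting from $(X,\mu)$, restricting the induced data to $\mu_L^{-1}([1])=\{[1,x]\}\cong X$ returns the original $\pi'$ and $\mu$ by the defining formulas. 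The only subtlety here is bookkeeping the $H$- versus $D$-equivariance and the local-group caveats, but no new geometric input is needed. I expect the well-definedness and the quasi-Jacobi check of $\pi$ on $M=(D\times X)/H$ to be the single genuinely technical point; everything else is either cited from Theorems \ref{thm:qp-qp} and \ref{thm:qp-qp-mm} or is a formal identification of associated bundles.
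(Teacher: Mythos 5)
Your proposal follows essentially the same route as the paper's proof: the underlying set-level bijection is the universal property of inducing an $H$-action to a $D$-action; the bivector $\pi$ on $M=(D\times X)/H$ is defined along $X$ by inverting the twist relation \eqref{eq:pi'}, with $H$-invariance coming from $[\rho_X(u),\pi']=-\rho_X^{\otimes2}(\delta_\h(u))$, the quasi-Jacobi identity checked at points of $X$ and propagated by $D$-invariance, and the centrality of $\mu_L,\mu_R$ read off from $\sigma=\pi'+\sum_i\rho(e_i)\otimes\rho(e^i)$ and the moment map equation. All of this matches the paper.

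One point needs adjusting. In the direction ``central pair $\Rightarrow(X,\mu)$'' you propose to quote Theorems \ref{thm:qp-qp} and \ref{thm:qp-qp-mm} on the grounds that $X=\mu_L^{-1}([1])$ is a left leaf with stabilizer $\h$. But Theorem \ref{thm:corr} does not assume the central pair is split-symplectic, and for a general left-central map the fibers are only \emph{unions} of left leaves (e.g.\ if $\sigma=0$ the left leaves are points while $\mu_L$ can have positive-dimensional fibers), so the hypotheses of Theorem \ref{thm:qp-qp} are not literally satisfied. The fix is what the paper does (``as in the proof of''): rerun the tangency computation directly, using left-centrality of $\mu_L$ to get $\sigma(\mu_L^*\gamma,\cdot)=0$ and the fact that $\h$ stabilizes $[1]\in D/H$ to get $(\mu_L^*\gamma)\bigl(\rho_M(e_i)\bigr)=\gamma\bigl(\rho_{D/H}(e_i)\bigr)=0$, whence $\pi'=\sigma-\sum_i\rho(e_i)\otimes\rho(e^i)$ annihilates the conormal bundle of the fiber and is tangent to it. With that substitution your argument is complete.
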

\begin{proof}
As the action of $\mf d$ on $D/H$ is transitive, the map $\mu_L$ is a submersion, and thus $\mu_L^{-1}([1])\subset M$ is a submanifold.

If we forget about bivector fields and understand \eqref{eq:cp} and \eqref{eq:mm} as diagrams of $D$- and $H$-equivariant maps respectively, then their equivalence is simply the universal property of the induction \eqref{eq:ind} from $H$-action  to $D$-action.

The bivector fields can be treated as follows. If the central pair \eqref{eq:cp} is given then, as in the proof of Theorem \ref{thm:qp-qp}, \eqref{eq:pi'} makes $M$ to a $(\h,\mf d;\h^*)$-quasi-Poisson manifold, and it is tangent to $X$, hence $X\subset M$ is a $(\h,\mf d;\h^*)$-quasi-Poisson manifold. As in the proof of Theorem \ref{thm:qp-qp-mm}, $\mu_R:M\to N$ is a moment map, and thus restricts to a moment map $X\to N$.

For the other direction, suppose that the bivector field $\pi'$ on $X$ is given.
We define $\pi$ first as a section of $(TM)|_X$ via
$$\pi=\pi'+\rho_M^{\otimes2}(\tau_{\h,\h^*}).$$
The property
$$[\rho_X(u),\pi']=-\rho^{\otimes2}_X(\delta(u))\quad(\forall u\in\h)$$
ensures that $\pi$ is $H$-invariant. We can thus extend it (uniquely) to a $D$-invariant bivector field on $M$. The fact that $\pi'$ is $(\h,\mf d;\h^*)$-quasi-Poisson then implies that $[\pi,\pi]/2=\rho_M^{\otimes 3}(\phi_{\mf d})$ at the points of $X$; as $\pi$ is $D$-invariant, this relation is satisfied everywhere on $M$, i.e.\ $\pi$ is $\mf d$-quasi-Poisson. Left and right centrality of $\mu_L$ and $\mu_R$ (first at the points of $X\subset M$ and then on entire $M$ by $D$-invariance)  then follows easily from $\sigma=\pi'+\sum_i\rho(e_i)\otimes\rho(e^i)$
and from the fact that $\mu$ is a moment map.
\end{proof}

We can now explaint why moment map reduction, in its most general form given in \cite{lb-se-big}, is a special case of central reduction. Namely, if $C\subset D$ is a Lagrangian subgroup and $\mathcal O_N\subset N$ is a $C$-invariant submanifold, and if $\mu:X\to N$ is a moment map, then the reduction theorem of \textit{op.\ cit.} makes $X_{red}:=\mu^{-1}(\mathcal O_N)/C\cap H$ to a Poisson manifold; if $X$ is quasi-symplectic, the action on $N$ is transitive with Lagrangian stabilizers, and $\mathcal O_N$ is a $C$-orbit, then $X_{red}$ is symplectic. If we induce $X$ to a central pair \eqref{eq:cp} then this reduction is simply the central reduction of $M$ with $\mathcal O_N\subset N$ and $\mathcal O_{D/H}=C\cdot [1]\subset D/H$.

As another application, we can define fusion product of $D/H$-valued moment maps. If $\mu:X_1\to D/H$ and $\nu:X_2\to D/H$ are moment maps for $(H,\mf d;\h^*)$-quasi-Poisson manifolds $X_1$ and $X_2$, we induce them to central pairs
$$
\begin{tikzcd}[column sep=small]
& M_1 \arrow{dl}[swap]{\mu_L}\arrow{dr}{\mu_R} & \\
D/H & & D/H
\end{tikzcd}
\begin{tikzcd}[column sep=small]
& M_2 \arrow{dl}[swap]{\nu_L}\arrow{dr}{\nu_R} & \\
D/H & & D/H
\end{tikzcd}.
$$
By Theorem \ref{thm:fus}, 
$$
\begin{tikzcd}[column sep=small]
& M_1\times_{D/H}M_2 \arrow{dl}[swap]{\mu_L}\arrow{dr}{\nu_R} & \\
D/H & & D/H
\end{tikzcd}
$$
is a central pair, and we define, using our correspondence, the fusion of $X_1$ and $X_2$ as the $(H,\mf d;\h^*)$-quasi-Poisson manifold
$$X_1\circledast X_2:=\mu_L^{-1}([1])\subset M_1\times_{D/H}M_2.$$
In other words,
$$X_1\circledast X_2=X_1\times_{D/H}(D\times X_2)/H,$$
where the fibre product is taken over the maps $\mu:X_1\to N$ and $\nu_L:(D\times X_2)/H\to D/H$. The fact that $X_1\circledast X_2$ is not $X_1\times X_2$ is probably the reason why it was so far elusive.

Similarly, we can define the conjugate $\bar\mu:\bar X\to D/H$ of a $D/H$-valued moment map $\mu:X\to D/H$. Let
$$
\begin{tikzcd}[column sep=small]
& M=(D\times X)/H \arrow{dl}[swap]{\mu_L}\arrow{dr}{\mu_R} & \\
D/H & &D/H
\end{tikzcd}
$$
be the central pair corresponding to $\mu:X\to D/H$ by Theorem \ref{thm:corr}. Let $\bar M=M$ with $\pi$ replaced by $-\pi$. $\bar M$ is still a $D$-quasi-Poisson manifold, but $\mu_L,\mu_R:\bar M\to D/H$ are now right and left central respectively. As a result,
$$\bar X:=\mu_R^{-1}([1])\subset\bar M$$
is $(H,\mf d;\h^*)$-quasi-Poisson and $\bar\mu:=\mu_L|_{\bar X}$ is a moment map.

\begin{example}
$D/H$ is a commutative  $\dd$-quasi-Poisson manifold, and we have the central pair
$$
\begin{tikzcd}[column sep=small]
& D/H\circledast D/H \arrow{dl}[swap]{p_1}\arrow{dr}{p_2} & \\
D/H & &D/H
\end{tikzcd}
$$
where $p_{1,2}$ are the projections. Our correspondence makes $p_1^{-1}([1])=D/H$ to a $(H,\dd)$-quasi-Poisson manifold with the moment map $\on{id}:D/H\to D/H$. To avoid confusion with the $\dd$-quasi-Poisson $D/H$, let us denote this $(H,\dd)$-quasi-Poisson manifold by $(D/H)_{(\h)}$. It was discovered in \cite{a-ks}.
The central pair corresponding the fusion product of $(D/H)_{(\h)}$ with itself is readily seen to be
$$
\begin{tikzcd}[column sep=small]
& D/H\circledast D/H\circledast D/H \arrow{dl}[swap]{p_1}\arrow{dr}{p_3} & \\
D/H & &D/H
\end{tikzcd}
$$
where $p_{1,3}$ are the projections to the first and the third factor respectively.
\end{example}

\appendix

\section{A non-degeneracy lemma}
All the vector spaces in this section are over a field $K$, $\on{char}K\neq 2$, and finite-dimensional.

Let $U$ and $U'$ be vector spaces, and let $\la\cdot,\cdot\ra:U\times U'\to K$ be a non-degenerate pairing. Let us introduce the following bilinear forms on $U\oplus U'$:
$$(u\oplus\alpha,v\oplus\beta)=\la u,\beta\ra$$
$$(u\oplus\alpha,v\oplus\beta)_{sym}=\la u,\beta\ra+\la v,\alpha\ra$$
$$(u\oplus\alpha,v\oplus\beta)_{skew}=\la u,\beta\ra-\la v,\alpha\ra.$$
A subspace $L\subset U\oplus U'$ is \emph{$(\cdot,\cdot)_{sym}$-Lagrangian} if $(x,y)_{sym}=0$ for all $y\in L$ iff $x\in L$.
\begin{prop}\label{prop:ndeg-aux}
If $L\subset U\oplus U'$ is $(\cdot,\cdot)_{sym}$-Lagrangian then
$$\ker\bigl((\cdot,\cdot)_{skew}|_L\bigr)=(L\cap U)\oplus(L\cap U').$$
\end{prop}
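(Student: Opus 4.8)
The plan is to prove the two inclusions separately, the only real input being that ``$(\cdot,\cdot)_{sym}$-Lagrangian'' packages two facts at once: isotropy of $L$ (if $x\in L$ then $(x,y)_{sym}=0$ for all $y\in L$) together with its maximality (if $(x,y)_{sym}=0$ for all $y\in L$ then $x\in L$). Throughout I write a general element of $U\oplus U'$ as $u\oplus\alpha$ with $u\in U$, $\alpha\in U'$.

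For the easy inclusion $(L\cap U)\oplus(L\cap U')\subseteq\ker\bigl((\cdot,\cdot)_{skew}|_L\bigr)$, I would take $u\oplus0,\ 0\oplus\alpha\in L$, so that $x:=u\oplus\alpha\in L$, and test against an arbitrary $y=v\oplus\beta\in L$. Isotropy applied to the pairs $(u\oplus0,y)$ and $(0\oplus\alpha,y)$ yields $\la u,\beta\ra=0$ and $\la v,\alpha\ra=0$ respectively, whence $(x,y)_{skew}=\la u,\beta\ra-\la v,\alpha\ra=0$. Thus $x$ lies in the kernel.

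The reverse inclusion is the substance of the statement. Starting from $x=u\oplus\alpha\in\ker\bigl((\cdot,\cdot)_{skew}|_L\bigr)$, I would combine two identities holding for every $y=v\oplus\beta\in L$: isotropy gives $(x,y)_{sym}=\la u,\beta\ra+\la v,\alpha\ra=0$, while the kernel hypothesis gives $(x,y)_{skew}=\la u,\beta\ra-\la v,\alpha\ra=0$. Adding and subtracting these (this is where $\on{char}K\neq2$ enters) decouples them into $\la u,\beta\ra=0$ and $\la v,\alpha\ra=0$ for all $y\in L$. The first condition reads $(u\oplus0,y)_{sym}=0$ for all $y\in L$, and the second reads $(0\oplus\alpha,y)_{sym}=0$ for all $y\in L$; by the maximality half of the Lagrangian hypothesis these force $u\oplus0\in L$ and $0\oplus\alpha\in L$, i.e.\ $u\in L\cap U$ and $\alpha\in L\cap U'$. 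Hence $x\in(L\cap U)\oplus(L\cap U')$.

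I do not expect a genuine obstacle here: once the three forms are written out, the whole argument is the observation that $(\cdot,\cdot)_{sym}$ and $(\cdot,\cdot)_{skew}$ share the term $\la u,\beta\ra$ and differ in the sign of $\la v,\alpha\ra$, so passing to their sum and difference separates the $U$-contribution from the $U'$-contribution. The only point that asks for care is to invoke \emph{both} directions of the Lagrangian ``iff'' — isotropy for the first inclusion and the reconstruction of elements of $L$ from orthogonality for the second. It is worth remarking that non-degeneracy of $\la\cdot,\cdot\ra$ is not actually used in this proof; the Lagrangian hypothesis on $L$ carries all the weight.
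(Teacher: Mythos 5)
Your proof is correct and is essentially the paper's argument: the paper packages the add/subtract decoupling as the identity $(\cdot,\cdot)_{skew}|_L=2(\cdot,\cdot)|_L$ (using isotropy of $L$ and $\on{char}K\neq2$) and then invokes maximality exactly as you do to place $u\oplus0$ and $0\oplus\alpha$ in $L$. Your closing observation that non-degeneracy of $\la\cdot,\cdot\ra$ is not needed here is also accurate.
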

\begin{proof}
Notice that $(\cdot,\cdot)_{skew}|_L=2(\cdot,\cdot)|_L$, as $(\cdot,\cdot)_{sym}|_L=0$. This shows that both $(L\cap U)$ and $(L\cap U')$ are in the kernel, as $U$ and $U'$ are the right and left kernel of $(\cdot,\cdot)$.

If, on the other hand, $u\oplus\alpha\in\ker((\cdot,\cdot)_{skew}|_L)=\ker((\cdot,\cdot)|_L)$ then, for every $x\in U\oplus U'$, $(u\oplus0,x)_{sym}=(u\oplus0,x)=(u\oplus\alpha,x)=0$. Since $L$ is Lagrangian, this implies $u\oplus0\in L\cap U$ and thus also $0\oplus\alpha\in L\cap U'$, hence $u\oplus\alpha\in(L\cap U)\oplus(L\cap U')$.
\end{proof}

\begin{prop}\label{prop:ndeg}
Let $V$ be a vector space with a bilinear pairing $\sigma:V\times V\to K$, and $W$ a vector space with a symmetric non-degenerate pairing $t:W\times W\to K$. Let $f:V\to W$ be a linear map such that 
$$\sigma(v,v)=\frac{1}{2}t\bigl(f(v),f(v)\bigr)$$
 for every $v\in V$. Let $V_L,V_R\subset V$ be the left and right kernels of $\sigma$. 

Suppose that $V_L\cap\ker f=0$ and that $f(V_L)\subset W$ is $t$-Lagrangian (i.e.\ not just $t$-isotropic). 

If $C\subset W$ is $t$-Lagrangian then the kernel of $\sigma|_{f^{-1}(C)}$ (which is a skew-symmetric form) is 
$$\ker\sigma|_{f^{-1}(C)}=V_L\cap f^{-1}(C)+V_R\cap f^{-1}(C).$$
\end{prop}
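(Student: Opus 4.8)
The plan is to derive the statement from Proposition \ref{prop:ndeg-aux} by encoding $(V,\sigma)$ as an isotropic subspace of a split bilinear space. Everything starts from polarizing the quadratic identity: replacing $v$ by $v+w$ in $\sigma(v,v)=\half t(f(v),f(v))$ gives
$$\sigma(v,w)+\sigma(w,v)=t\bigl(f(v),f(w)\bigr)\qquad(v,w\in V).$$
Two consequences are immediate. For $v,w\in f^{-1}(C)$ the right-hand side vanishes, since $C$ is $t$-isotropic; hence $\sigma|_{f^{-1}(C)}$ is skew-symmetric, as asserted. Taking instead $v,w\in V_L$ and using $\sigma(v,\cdot)=0$ shows $t(f(v),f(w))=0$, i.e.\ $f(V_L)$ is $t$-isotropic, and symmetrically $f(V_R)$ is $t$-isotropic. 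The inclusion $\supseteq$ is then clear: if $v\in V_L\cap f^{-1}(C)$ then $\sigma(v,\cdot)\equiv0$ on all of $V$, in particular on $f^{-1}(C)$, so $v\in\ker\sigma|_{f^{-1}(C)}$, and dually for $V_R\cap f^{-1}(C)$. The content is the reverse inclusion.

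To apply Proposition \ref{prop:ndeg-aux} I would take $U=V/V_L$ and $U'=V/V_R$, with the pairing
$$\la[v]_{V_L},[w]_{V_R}\ra:=\sigma(v,w),$$
which is well defined precisely because $V_L,V_R$ are the kernels of $\sigma$, and non-degenerate for the same reason (note $\dim U=\on{rank}\sigma=\dim U'$). Let $\iota:V\to U\oplus U'$ be $\iota(v)=([v]_{V_L},[v]_{V_R})$ and set $L=\iota(f^{-1}(C))$. By construction the form $(\cdot,\cdot)$ of Proposition \ref{prop:ndeg-aux} pulls back along $\iota$ to $\sigma$; hence $(\cdot,\cdot)_{sym}$ pulls back to $\sigma(v,w)+\sigma(w,v)=t(f(v),f(w))$, which vanishes on $f^{-1}(C)$, so $L$ is $(\cdot,\cdot)_{sym}$-isotropic, while $(\cdot,\cdot)_{skew}$ pulls back to twice $\sigma|_{f^{-1}(C)}$. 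Since $U$ and $U'$ are the right and left kernels of $(\cdot,\cdot)$, one reads off $L\cap U=\iota(V_R\cap f^{-1}(C))$ and $L\cap U'=\iota(V_L\cap f^{-1}(C))$. Granting that $L$ is $(\cdot,\cdot)_{sym}$-Lagrangian, Proposition \ref{prop:ndeg-aux} yields $\ker((\cdot,\cdot)_{skew}|_L)=(L\cap U)\oplus(L\cap U')$. Reading this back through $\iota|_{f^{-1}(C)}$, whose kernel $V_L\cap V_R\cap f^{-1}(C)$ is contained in $V_L\cap f^{-1}(C)$ (and lies in $\ker\sigma|_{f^{-1}(C)}$), converts it into the desired equality $\ker\sigma|_{f^{-1}(C)}=V_L\cap f^{-1}(C)+V_R\cap f^{-1}(C)$.

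The main obstacle is exactly the maximality hidden in the word \emph{Lagrangian}. I have shown $L$ is $(\cdot,\cdot)_{sym}$-isotropic and that $\dim L=\dim f^{-1}(C)-\dim(f^{-1}(C)\cap V_L\cap V_R)$, so what remains is the dimension count $\dim L=\dim U=\on{rank}\sigma$, equivalently that the isotropic subspaces $f(V_L)$ and $f(V_R)$ are in fact complementary \emph{Lagrangian} subspaces of $(W,t)$. This is where the hypotheses must be used in full: the injectivity of $f|_{V_L}$ provided by $V_L\cap\ker f=0$, together with the non-degeneracy of $t$, is what should upgrade isotropy to maximality and let $f$ identify the split space $U\oplus U'$ with $W$ and the $t$-Lagrangian $C$ with $L$. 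I expect this maximality/non-degeneracy step, rather than the bookkeeping of the reduction, to be the crux of the argument.
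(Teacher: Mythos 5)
Your reduction to Proposition~\ref{prop:ndeg-aux} is exactly the paper's: the polarized identity $\sigma(v,w)+\sigma(w,v)=t(f(v),f(w))$, the quotients $U=V/V_L$, $U'=V/V_R$ with the pairing induced by $\sigma$, the map $\iota(v)=([v],[v])$, the identification of $L\cap U$ and $L\cap U'$ with the images of $V_R\cap f^{-1}(C)$ and $V_L\cap f^{-1}(C)$, and the passage back through $\ker\iota|_{f^{-1}(C)}=V_L\cap V_R\cap f^{-1}(C)$ are all correct and match the paper. But the one step you explicitly defer --- that $L=\iota(f^{-1}(C))$ is $(\cdot,\cdot)_{sym}$-\emph{Lagrangian} and not merely isotropic --- is precisely the content of the proposition, so as written the proof has a genuine gap. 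Moreover, the route you sketch for closing it (showing $f(V_L)$ and $f(V_R)$ are complementary Lagrangians so that $f$ identifies $U\oplus U'$ with $W$ and carries $C$ to $L$) is not what the hypotheses support: nothing forces $f$ to be surjective, nor $\dim V_L=\frac{1}{2}\dim W$, and I do not see that $\dim L=\on{rank}\sigma$ is equivalent to your reformulation (it visibly depends on $C$ through $\dim f^{-1}(C)$ and $\dim(V_L\cap V_R\cap f^{-1}(C))$).

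The paper closes this step differently, by a composition of Lagrangian relations. Enlarge the ambient space to $X=V/V_L\oplus V/V_R\oplus W$ with the nondegenerate symmetric form $(\cdot,\cdot)_{sym}\oplus(-t)$, and consider $F:V\to X$, $v\mapsto([v],[v],f(v))$; the hypothesis $V_L\cap\ker f=0$ enters here to make $F$ injective. The polarized identity says exactly that $F(V)$ is isotropic, and a dimension count makes it Lagrangian, i.e.\ a Lagrangian relation from $(W,t)$ to $(U\oplus U',(\cdot,\cdot)_{sym})$. Composing this relation with the Lagrangian $C\subset W$ yields precisely $L=\iota(f^{-1}(C))$, and the standard fact that a composition of Lagrangian relations is Lagrangian gives the maximality you are missing. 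I recommend you adopt this mechanism (or supply an equivalent direct computation of $L^\perp$); without it the proof is incomplete at its crux.
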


\begin{proof}
Let us consider the vector space
$$X=V/V_L\oplus V/V_R\oplus W$$
and the injective map
$$F:V\to X,\ v\mapsto([v],[v],f(v)).$$
On $X$ we have the non-degenerate symmetric pairing $(,)_{sym}\oplus(-t)$, where the pairing $(,)_{sym}$ on $V/V_L\oplus V/V_R$ comes from the pairing $\la [v],[v']\ra=\sigma(v,v')$, $[v]\in V/V_L$, $[v']\in V/V_R$. The image of $V$ is isotropic, and for dimension reasons it is Lagrangian.

Since the composition of Lagrangian relations is Lagrangian, the image of the map
$$F':f^{-1}(C)\to V/V_L\oplus V/V_R,\ v\mapsto([v],[v])$$
is Lagrangian, being the composition of $F(V)$ and $C$. The kernel of $F'$ is 
$$(V_L\cap f^{-1}(C))\cap(V_R\cap f^{-1}(C)).$$

Finally, since
$$\sigma|_{f^{-1}(C)}(x,y)=(F'(x),F'(y))_{skew},$$
the result follows from Proposition \ref{prop:ndeg-aux}, applied to the Lagrangian subspace $F'(f^{-1}(C))$ of $V/V_L\oplus V/V_R$.

\end{proof}

\end{document}